\documentclass{article}
\usepackage[margin=1.2in]{geometry}
\usepackage{graphicx} % Required for inserting images
\usepackage{amsthm}   % For theorem environments
\usepackage{amsmath}  % For math environments
\usepackage{color}    % For comments
\usepackage{hyperref}
\usepackage{amsfonts}
\usepackage{bm}
\usepackage{subcaption}
\usepackage{svg}
\usepackage{appendix}
\usepackage{mathtools}
\usepackage{authblk}
\usepackage[
backend=biber,
style=alphabetic,
url=false,
doi=false,
isbn=false
]{biblatex}
\renewbibmacro{in:}{}

\title{Shellability of relative squeezed balls and spheres}
\author{Luz Elena Grisales Gómez}
\affil{Department of Mathematics\\
University of Washington\\
Seattle, WA 98195-4350, USA}
\date{July 2026}

\numberwithin{equation}{section}

\theoremstyle{plain}
\newtheorem{theorem}{Theorem}[section]
\newtheorem{proposition}[theorem]{Proposition}
\newtheorem{propositionDefinition}[theorem]{Proposition and Definition}
\newtheorem{lemma}[theorem]{Lemma}
\newtheorem*{Lemma*}{Theorem}
\newtheorem{corollary}[theorem]{Corollary}

\newenvironment{numlemma}[1]{\medskip\noindent {\bf Lemma #1.}\em}{}

\theoremstyle{remark}
\newtheorem{remark}[theorem]{Remark}

\theoremstyle{definition}
\newtheorem{notation}[theorem]{Notation}
\newtheorem{definition}[theorem]{Definition}

\begin{document}

\maketitle

\begin{abstract}
    Squeezed balls and spheres, introduced by Kalai, form a rich class of triangulated complexes arising from subcomplexes of cyclic polytopes, with well-understood shellability properties. Recently, Novik and Zheng introduced relative squeezed balls, obtained as differences of squeezed balls, and used them to construct large families of highly neighborly simplicial spheres. While these complexes are known to be constructible, their shellability has remained open. In this paper, we resolve this question by proving that both relative squeezed balls and their boundary complexes are shellable. We provide explicit shelling orders and characterize restriction faces, thereby establishing strong combinatorial structure for this new class of complexes.

\end{abstract}

\section{Introduction}
Squeezed balls and spheres were introduced by Kalai \cite{kalaiManyTriangulatedSpheres1988} to obtain a large family of triangulated $(d-1)$-spheres with $n$ vertices. They arise as specific subcomplexes of the boundary complex of the cyclic polytope $C_{d+1}(n)$ and can be viewed as a generalization of the Billera--Lee construction \cite{billeraProofSufficiencyMcMullens1981}, which was used to prove the sufficiency of McMullen's conditions in the $g$-theorem. The boundary complex of a squeezed ball is called a \emph{squeezed sphere}. For these simplicial complexes, shellability is well understood. Kalai proved that squeezed balls are shellable, and Lee \cite{leeKalaisSqueezedSpheres2000} later established shellability of squeezed spheres.

Relative squeezed balls were later introduced by Novik and Zheng \cite{novikManyNeighborlySpheres2024} upon observing that, under suitable conditions, the difference\footnote{Given two simplicial complexes $\Delta_1$ and $\Delta_2$, their `difference', denoted $\Delta_1 \setminus \Delta_2$, refers to the simplicial complex whose facets belong to $\Delta_1$ but not to $\Delta_2$.} of two squeezed balls results in a ball. This resulting ball is called a \emph{relative squeezed ball}, and its boundary complex is called a \emph{relative squeezed sphere}. Novik and Zheng used these complexes to construct a large family of $\lfloor d/2 \rfloor$-neighborly simplicial $(d-1)$-spheres with $n$ labeled vertices. While constructibility of relative squeezed balls is known \cite{novikManyNeighborlySpheres2024}, whether relative squeezed balls and their boundaries are shellable has remained open.

The importance of resolving this question stems from the strong structural consequences of shellability. In particular, shellability endows a complex with a rich combinatorial framework that is particularly well suited for inductive arguments. Moreover, shellability often leads to proofs that are substantially more elementary or transparent than those available in the general setting; for instance, by exploiting shellability, Alon and Kalai obtained a proof of the Upper Bound Theorem for shellable simplicial spheres \cite{alonSimpleProofUpper1985} that only relies on linear algebra tools, and as such is more elementary than Stanley’s original proof for arbitrary simplicial spheres \cite{stanleyUpperBoundConjecture1975}. Finally, several important properties are known to hold for shellable spheres but remain open for general spheres; for example, shellable spheres can be reconstructed from their facet–ridge graph \cite{yangReconstructingShellableSphere2024}, whereas whether this reconstruction property holds in full generality is still unresolved.

In this paper, we provide a positive answer to the shellability question for relative squeezed balls and relative squeezed spheres. The structure of the paper is as follows: In Section 2 we introduce several preliminary definitions, constructions, and results that will be used throughout the rest of the paper. In Section 3 we prove that relative squeezed spheres are shellable using a generalization of Lee's shelling for squeezed spheres \cite{leeKalaisSqueezedSpheres2000}, and standard shellability results. Finally, in Section 4 we prove that relative squeezed balls are shellable by providing a shelling order and a description of the restriction faces. 

\section{Preliminaries}

In this section we review several definitions, constructions, and results that will be used throughout the paper. Our exposition follows standard terminology in the study of simplicial complexes; for additional background and further details, the reader is referred to Chapter~8 of Ziegler’s book \emph{Lectures on Polytopes} \cite{zieglerLecturesPolytopes2007}.

\subsection{Simplicial complexes}

A \emph{simplicial complex} $\Delta$ with vertex set $V(\Delta)$ is a collection of subsets of $V(\Delta)$ that is closed under inclusion and contains all singletons: $\{v\} \in \Delta$ for every $v \in V(\Delta)$. The elements of $\Delta$ are called \emph{faces}. 

For any collection $\mathcal{F}$ of faces of a simplicial complex, we denote by 
$\overline{\mathcal{F}}$ the simplicial complex generated by $\mathcal{F}$; that is,
\[
\overline{\mathcal{F}} \coloneq \{\tau : \tau \subseteq F \text{ for some } F \in \mathcal{F}\}.
\]
In particular, for a single face $F$ we write 
$\overline{F} \coloneq \overline{\{F\}}$, the simplex on the vertex set $F$. 

The \emph{dimension} of a face $F \in \Delta$ is  $\dim F \coloneq |F|-1$, and the dimension of $\Delta$ is the maximum dimension of any face of $\Delta$. A face that is maximal under inclusion is called a \emph{facet}. We say that $\Delta$ is \emph{pure} if all of its facets have the same dimension.

The \emph{geometric realization} $|\Delta|$ of a simplicial complex $\Delta$ is obtained by identifying each face $\tau = \{v_{i_0},\dots,v_{i_k}\}$ with a geometric $k$-simplex and gluing these simplices together along their common subfaces in the natural way. A pure simplicial complex $\Delta$ is called a \emph{simplicial $d$-ball} (respectively, a \emph{simplicial $d$-sphere}) if its geometric realization $|\Delta|$ is homeomorphic to the $d$-dimensional ball $B^d$ (respectively, to the $d$-dimensional sphere $S^d$). If $\Delta$ is a simplicial $d$-ball, then the \emph{boundary complex} of $\Delta$, denoted $\partial \Delta$, is the subcomplex of $\Delta$ generated by all $(d-1)$-faces that are contained in exactly one facet of $\Delta$. This definition implies $|\partial \Delta| = \partial |\Delta| \cong S^{d-1}$.

If $\Delta$ is a pure simplicial complex and $\Gamma$ is a full-dimensional pure subcomplex, then $\Delta \setminus \Gamma$ denotes the subcomplex generated by those facets of $\Delta$ that are not facets of $\Gamma$. Finally, if $\Delta$ and $\Gamma$ are simplicial complexes on disjoint vertex sets, their \emph{join} is the simplicial complex
\[
\Delta * \Gamma \coloneq \{\sigma \cup \tau : \sigma \in \Delta 
\text{ and } \tau \in \Gamma\}.
\]

\subsection{Constructibility, shellability, and restriction faces}

A pure simplicial complex $\Delta$ of dimension $d$ is \emph{constructible} if either $\Delta$ is a simplex, or $\Delta = \Delta_1 \cup \Delta_2$, where $\Delta_1$ and $\Delta_2$ are constructible complexes of the same dimension as $\Delta$, and $\Delta_1 \cap \Delta_2$ is constructible of dimension $d-1$.

Let $\Delta$ be a pure simplicial complex of dimension $d$. A \emph{shelling} of $\Delta$ is an ordering of the facets
$F_1,\dots,F_m$ such that for every $i>1$, the intersection $\overline{F_i} \cap \left( \overline{F_1} \cup \cdots \cup \overline{F_{i-1}} \right)$ is a pure $(d-1)$-dimensional complex. A complex that admits a shelling is called \emph{shellable}. 

It follows directly from the definitions that all shellable complexes are constructible. While currently no examples are known of constructible spheres that fail to be shellable, there do exist constructible balls that are not shellable; prominent examples, in chronological order, include Rudin’s 3-ball \cite{rudinUnshellableTriangulationTetrahedron1958}, Grünbaum’s 3-ball \cite{danarajKleeWhichSpheresAreShellable}, and Ziegler’s 3-ball \cite{zieglerShellingPolyhedral3Balls1998a}.

One effective method for proving that an ordering $F_1,\dots,F_m$ is a shelling is via \emph{restriction faces}.

\begin{definition}\label{def:restriction-face}
Let $F_1,\dots,F_m$ be an ordering of the facets of a pure simplicial complex $\Delta$.
For each $1<j\leq m$, a face $X_j \subseteq F_j$ is called the \emph{restriction face} of $F_j$ if the following two conditions hold:
\begin{enumerate}
    \item For every $x \in X_j$, there exists some $i<j$ such that
    $F_j \setminus \{x\} \subseteq F_i.$
    \item For every $i<j$, one has $X_j \not\subseteq F_i$.
\end{enumerate}
\end{definition}

\begin{lemma}[Restriction Face Criterion\footnote{See Exercise 8.2 of \cite{zieglerLecturesPolytopes2007}.}]\label{lemma:restriction-face-criterion}
Let $F_1,\dots,F_m$ be an ordering of the facets of a pure simplicial complex $\Delta$. Each $F_j$ admits a restriction face $X_j$ if and only if $F_1,\dots,F_m$ is a shelling of $\Delta$.
\end{lemma}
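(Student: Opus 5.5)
The plan is to reduce both directions to one description of the intersection complex. Fix $j>1$ and set
\[
\Gamma_j \coloneq \overline{F_j} \cap \left( \overline{F_1} \cup \cdots \cup \overline{F_{j-1}} \right).
\]
A subset $\tau \subseteq F_j$ lies in $\Gamma_j$ if and only if $\tau \subseteq F_i$ for some $i<j$. Next, let
\[
R_j \coloneq \{x \in F_j : F_j\setminus\{x\} \subseteq F_i \text{ for some } i<j\}.
\]
The ridges of $\overline{F_j}$ contained in $\Gamma_j$ are exactly the sets $F_j\setminus\{x\}$ with $x\in R_j$.

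The key claim is the following. $\Gamma_j$ is pure of dimension $d-1$ if and only if its faces are precisely the subsets $\tau\subseteq F_j$ with $R_j\not\subseteq \tau$, i.e.\ those missing at least one element of $R_j$. We also need $R_j\neq\emptyset$ whenever $\Gamma_j$ is nonempty.

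For the implication from purity, suppose $\Gamma_j$ is pure of dimension $d-1$. Then every face $\tau\in\Gamma_j$ lies in some ridge $F_j\setminus\{x\}$ of $\Gamma_j$, so $x\in R_j\setminus\tau$. Conversely, every subset missing some $x\in R_j$ lies in $F_j\setminus\{x\}\in\Gamma_j$. For the reverse implication, if the faces of $\Gamma_j$ are exactly the sets missing an element of $R_j$, then the facets of $\Gamma_j$ are exactly the ridges $F_j\setminus\{x\}$ with $x\in R_j$, so $\Gamma_j$ is pure of dimension $d-1$.

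For the direction "restriction faces exist $\Rightarrow$ shelling", let $X_j$ satisfy the two conditions. Condition 1 says $X_j\subseteq R_j$. Condition 2 says that no $F_i$ with $i<j$ contains $X_j$, and hence no $\tau\supseteq X_j$ lies in $\Gamma_j$.

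It remains to check that every $\tau\in\Gamma_j$ misses some element of $X_j$, not merely of $R_j$. Suppose instead that $X_j\subseteq\tau\subseteq F_i$ for some $i<j$. This contradicts Condition 2. So $\tau\not\supseteq X_j$, i.e.\ $\tau$ misses some $x\in X_j$. Then $\tau\subseteq F_j\setminus\{x\}$, and this set lies in $\Gamma_j$ because $x\in X_j\subseteq R_j$.

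Hence every face of $\Gamma_j$ lies in a $(d-1)$-face of $\Gamma_j$, so $\Gamma_j$ is pure of dimension $d-1$. Nonemptiness is immediate: the empty face lies in $\Gamma_j$, and Condition 2 applied to $\emptyset\subseteq F_i$ shows $X_j\neq\emptyset$. Therefore $R_j\neq\emptyset$ and $\Gamma_j$ has dimension $d-1$.

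For the converse, assume the ordering is a shelling and set $X_j\coloneq R_j$. Condition 1 holds by definition of $R_j$. Condition 2 follows from the key claim: $R_j$ does not miss any element of itself, so $R_j\notin\Gamma_j$, and thus $R_j\not\subseteq F_i$ for all $i<j$.

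The main subtlety, rather than a real obstacle, is purity of dimension exactly $d-1$. The definition of a shelling requires $\Gamma_j$ to be $(d-1)$-dimensional, and this forces $R_j\neq\emptyset$. This matters because, if $R_j$ were empty, the empty set would trivially satisfy Condition 1 but fail Condition 2. It also shows that $X_j=R_j$ is the unique restriction face.
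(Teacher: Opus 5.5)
Your proof is correct and complete. The paper does not prove this lemma itself; it only cites Exercise 8.2 of Ziegler, so your argument is a self-contained proof rather than a variant of one in the text.

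Your argument is organized around two things:
\begin{itemize}
\item the set $R_j$ of vertices $x\in F_j$ for which the ridge $F_j\setminus\{x\}$ already lies in an earlier facet;
\item your key claim that $\Gamma_j$ is pure of dimension $d-1$ exactly when its faces are the subsets of $F_j$ not containing $R_j$.
\end{itemize}
This gives more than the stated equivalence. First, any restriction face is forced to equal $R_j$: if $x\in R_j\setminus X_j$, the ridge $F_j\setminus\{x\}$ would contain $X_j$ and lie in an earlier facet, violating Condition 2. Second, the faces of $\overline{F_j}$ not in $\Gamma_j$ are precisely the interval $[R_j,F_j]$. The paper asserts this interval description right after the lemma, and relies on it for $h_i(\Delta)=\#\{\text{restriction faces of size } i\}$, without justification. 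Your approach supplies it directly.

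Your treatment of the degenerate case is the right one. Applying Condition 2 to $\emptyset\subseteq F_1$ forces $X_j\neq\emptyset$, and that is what makes $\dim\Gamma_j$ exactly $d-1$ rather than smaller.

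The only looseness is the phrase ``whenever $\Gamma_j$ is nonempty.'' For $j>1$, $\Gamma_j$ always contains the empty face, so it is never empty. The precise statement is that purity in dimension $d-1$ forces $R_j\neq\emptyset$. Equivalently, the right-hand side of your key claim forces it, since $\emptyset\in\Gamma_j$ must miss some element of $R_j$.
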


The restriction face $X_j$ is also known as the \emph{unique minimal new face} added at step $j$. This is due to the fact that the collection of faces introduced at step $j$ is exactly the interval
\[
[X_j,F_j] = \{\, G : X_j \subseteq G \subseteq F_j \,\}
\]
in the face poset of $\Delta$. Another key property of restriction faces is that, given a shelling of $\Delta$, the $h$-vector of $\Delta$ can be read off directly from the restriction faces. Namely,
\[
h_i(\Delta) = \#\{\text{restriction faces of size } i\}.
\]

\subsection{The cyclic polytope}
Let $m : \mathbb{R} \to \mathbb{R}^d$, 
$t \mapsto (t, t^2, \dots, t^d)$, be the moment curve in $\mathbb{R}^d$, 
and let $t_1 < t_2 < \cdots < t_n$ be distinct real numbers, where $n > d$. 
The cyclic $d$-polytope $C_d(n)$ is defined as the convex hull
\[
C_d(n) \coloneq \operatorname{conv}(m(t_1), \dots, m(t_n)).
\]
It is known that $C_d(n)$ is a simplicial $d$-polytope with $n$ vertices, 
that it is $\lfloor d/2 \rfloor$-neighborly, and that its combinatorial type 
is independent of the choice of $t_1, \dots, t_n$. 

In the rest of the paper we treat the boundary complex $\partial C_d(n)$ of $C_d(n)$ as an abstract simplicial complex. In particular, 
we identify a vertex $m(t_i)$ with $i \in [n] \coloneq \{1,2,\dots,n\}$ and 
the vertex set of $\partial C_d(n)$ with $[n]$. When listing the vertices in a face, we list them in increasing order.

The facets of $C_d(n)$ have a particularly nice description known as 
\emph{Gale's evenness condition}.

\begin{lemma}[Gale's Evenness Condition]
Let $F \subset [n]$ with $|F| = d$. Then $F$ is a facet of the cyclic polytope $C_d(n)$ if and only if for every pair $i < j$ with $i,j \notin F$, the number of elements of $F$ strictly between $i$ and $j$ is even.
\end{lemma}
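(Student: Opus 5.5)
We prove Gale's Evenness Condition directly from the geometry of the moment curve, by exhibiting a supporting hyperplane through $F$ via a polynomial whose sign pattern on $t_1,\dots,t_n$ is controlled by its roots. A hyperplane $\{x : a_0 + a_1x_1 + \dots + a_dx_d = 0\}$ meets the moment curve at the real roots of the polynomial $p(t) = a_0 + a_1 t + \dots + a_d t^d$, which has degree at most $d$. Conversely, every nonzero polynomial of degree at most $d$ arises this way from an affine functional (possibly a constant, which gives no hyperplane).

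Fix $F = \{i_1 < \dots < i_d\}$ and set $p_F(t) = \prod_{k=1}^d (t - t_{i_k})$. This polynomial has degree exactly $d$, so it comes from a genuine affine hyperplane $H_F$. Since $p_F$ has exactly $d$ roots, $H_F$ contains precisely the $d$ points $m(t_{i_k})$ and no other vertex. These $d$ points are affinely independent: any $d+1$ points on the moment curve are affinely independent by the Vandermonde determinant, so in particular any $d$ of them are. Hence $H_F$ is the unique hyperplane through $\{m(t_i) : i \in F\}$.

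It follows that $F$ is a facet of $C_d(n)$ exactly when $H_F$ is a supporting hyperplane. Since no vertex outside $F$ lies on $H_F$, this is the condition that $p_F(t_j)$ has the same sign for all $j \notin F$. (Here we use that $C_d(n)$ is simplicial with exactly these vertices, as recorded above, so facets are $d$-subsets whose affine hull supports.)

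It remains to compute the sign pattern. For $j \notin F$, the sign of $p_F(t_j)$ is $(-1)^{\#\{k : i_k > j\}}$, because each factor $t_j - t_{i_k}$ is negative exactly when $i_k > j$. For $i < j$ both outside $F$, the signs of $p_F(t_i)$ and $p_F(t_j)$ differ by the factor $(-1)^{\#\{k : i < i_k < j\}}$. Therefore all values $p_F(t_j)$ with $j \notin F$ share a sign if and only if every such pair is separated by an even number of elements of $F$. This is exactly Gale's condition.

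The only delicate point is justifying that uniqueness of $H_F$ makes the equivalence sharp. Without uniqueness, a non-supporting $H_F$ would not by itself rule out some other hyperplane through $F$ being supporting. The affine independence argument via the Vandermonde determinant handles this, so I expect no real obstacle.
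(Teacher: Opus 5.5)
Your proof is correct. It is the standard proof of Gale's evenness condition, as in Chapter~0 of Ziegler's \emph{Lectures on Polytopes}: the polynomial $\prod_{i\in F}(t-t_i)$ defines the unique hyperplane through the $d$ affinely independent points $m(t_i)$, $i\in F$, and its sign at $t_j$, $j\notin F$, is $(-1)^{\#\{i\in F:\, i>j\}}$. The paper gives no proof of this lemma; it quotes it as a well-known fact about cyclic polytopes, so there is no other route to compare against.
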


\subsection{Squeezed balls and spheres}
Before we define squeezed balls and spheres, we need to establish some terminology. Suppose $F$ is a proper subset of $[n]$. We are first going to define the \emph{left}, \emph{right}, and \emph{middle sets} of $F$. Let $j = \min\{i \in [n]: i \not \in F\}$ and $k = \max\{i \in [n]: i \not \in F\}$. Then $\{1, \ldots, j-1\}$ is the (possibly empty) \emph{left set} of $F$ and $\{k+1, \ldots, n\}$ is the (possibly empty) \emph{right set} of $F$. On the other hand, if $j$ and $k$ are elements in $[n] - F$, with $j < k$, and the interval $[j+1, k-1]$ is in $F$, then $[j+1, k-1]$ is a \emph{middle set} of $F$. In the special case $F = [n]$, we say that both the left set and the right set equal $F$, and that $F$ has no middle sets.

Let $d = 2t-1$ be odd. Let $\mathcal{F}^{[m,n]}_{2t}$ denote all the facets of the cyclic polytope $C_{2t}(n)$ that have even-sized left, middle and right sets, and whose vertices lie in $[m, n]$.  When $t$ and $n$ are fixed or understood from context, we abbreviate $\mathcal{F}^{[1,n]}_{2t}$ as $\mathcal{F}_{2t}$ or as $\mathcal{F}$. Since the elements of $\mathcal{F}$ are of the form $F = (a_1, a_1+1, \ldots , a_t, a_t+1)$, we can associate them to points of the form $(a_1, a_2, \ldots, a_t) \in \mathbb R^t$ such that $a_1 + 1 < a_2$, $a_2+1 < a_3$, $\cdots$, $a_{t-1}+1 < a_t$ and $a_t +1 \leq n$. Plotting these integer points in $\mathbb R^t$ results in diagrams like the ones depicted in Figure \ref{fig:squeezed_ball_and_relative_squeezed_ball}.

The elements of $\mathcal{F}$ are ordered by the standard partial order $\leq_p$: for $F = (a_1, a_1+1, \ldots , a_t, a_t+1)$ and $G = (b_1, b_1+1, \ldots , b_t,b_t+1)$, we say that $G \leq_p F$ if $b_\ell \leq a_\ell$ for all $1 \leq \ell \leq t$. For an antichain $I$ in $\mathcal{F}$, we use $\mathcal{F}(I)$ to denote the initial set generated by $I$ \footnote{Given a partially ordered set $(\mathcal{F}, \le_p)$, the initial set generated by $I$ is the downward closure of $I$, i.e., $\mathcal{F}(I) = \{x \in \mathcal{F} : x \le y \text{ for some } y \in I\}.$}.

Now we can define squeezed balls and squeezed spheres following \cite{kalaiManyTriangulatedSpheres1988}: Let $I$ be an antichain in $\mathcal{F}$. The simplicial complex $B(I)$ whose facets are the sets in $\mathcal{F}(I)$ is a ball and it is called a \emph{squeezed ball}. The boundary complex $S(I) \coloneq \partial B(I)$ is called a \emph{squeezed sphere}. The facets of a squeezed ball can be plotted in $\mathbb R^t$ as in Figure \ref{fig:squeezed_ball_and_relative_squeezed_ball}a.

Squeezed balls form a large family of triangulated balls with remarkable combinatorial properties. In particular, we have the following theorems:

\begin{theorem}[\cite{kalaiManyTriangulatedSpheres1988}]
Every squeezed ball $B(I)$ is shellable.
\end{theorem}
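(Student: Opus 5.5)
The plan is to prove that any linear extension of $\leq_p$ on $\mathcal{F}(I)$ is a shelling, using the Restriction Face Criterion (Lemma~\ref{lemma:restriction-face-criterion}). Concretely, order the facets by the lexicographic order on the vectors $(a_1,\dots,a_t)$; this refines $\leq_p$. Since $\mathcal{F}(I)$ is an initial set, every facet $G \leq_p F$ with $G\in\mathcal{F}$ lies in $\mathcal{F}(I)$ and precedes $F$.

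For $F=(a_1,a_1+1,\dots,a_t,a_t+1)$, I would define the candidate restriction face
\[
X_F \coloneq \{\,a_\ell : 1\le \ell\le t,\ a_\ell > a_{\ell-1}+2\,\},
\]
with the convention $a_0=-1$, so the condition for $\ell=1$ reads $a_1>1$. In words, $X_F$ collects those $a_\ell$ for which the pair can be shifted down by one while keeping a valid element of $\mathcal{F}$.

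To verify condition 1 of Definition~\ref{def:restriction-face}, take $x=a_\ell\in X_F$ and set $G=(\dots,a_\ell-1,a_\ell,\dots)$. This is obtained from $F$ by replacing the pair $a_\ell,a_\ell+1$ with $a_\ell-1,a_\ell$. The condition $a_\ell-1>a_{\ell-1}+1$ keeps $G$ in $\mathcal{F}$, we have $G<_p F$, and $F\setminus\{a_\ell\}\subseteq G$ fails only at $a_\ell+1$. I need to fix this. The correct removal is of $a_\ell+1$: indeed $F\setminus\{a_\ell+1\}\subseteq G$, since $G$ contains $a_\ell-1$ and $a_\ell$. So I should instead define $X_F=\{a_\ell+1 : a_\ell>a_{\ell-1}+2\}$, with condition 1 verified by the facet $G$ above.

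Condition 2 is where the real work lies. Suppose some earlier facet $G=(b_1,b_1+1,\dots)$, preceding $F$ in the order, contains $X_F$. Let $\ell$ be the first index with $b_\ell\neq a_\ell$; then $b_\ell<a_\ell$. I would argue by a monotone matching of pairs. The elements $a_m+1\in X_F$ must be covered by pairs of $G$, and a pair of $G$ covers $a_m+1$ only if $b_{m'}\in\{a_m,a_m+1\}$. Counting how many pairs of $G$ are available above position $\ell$ then forces a contradiction. The key case is when $a_\ell$ is not in $X_F$, meaning $a_\ell=a_{\ell-1}+2$. In that case I would move to the first index $k\ge\ell$ in the maximal chain of tight pairs, so that $a_k+1$ is forced to be covered by a pair starting strictly later. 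Combined with the pigeonhole count on the remaining $t-\ell+1$ pairs, this gives the contradiction.

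I expect this counting argument to be the main obstacle. The alternative is to avoid it altogether: prove directly that every facet $G$ preceding $F$ satisfies $G\cap F\subseteq F\setminus\{a_m+1\}$ for some $a_m+1\in X_F$. One can find such a $G'$ adjacent to $F$ by shifting a single pair of $F$ down toward $G$, using Gale's evenness condition and the structure of $\mathcal{F}$ to confirm that $G'\in\mathcal{F}(I)$. This shows the intersection with the previously placed facets is pure of codimension one.
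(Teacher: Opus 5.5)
\textbf{There is a genuine gap: your restriction face is wrong, so Condition~2 of Definition~\ref{def:restriction-face} fails.} The overall strategy is sound: take a linear extension of $\le_p$ (e.g.\ lex order) and check the Restriction Face Criterion. But no counting argument can complete Condition~2 for your set. Take $t=2$ and $F=\{3,4,5,6\}$, i.e.\ $(a_1,a_2)=(3,5)$. Your rule keeps $4$ (since $a_1=3>1$) and discards $6$ (since $a_2=a_1+2$), so $X_F=\{4\}$. Now take $G=\{1,2,3,4\}$, i.e.\ $(1,3)$. It satisfies $G<_p F$, so it lies in $\mathcal F(I)$ and precedes $F$ in every linear extension, and $G\supseteq X_F$. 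Your fallback fails on the same pair. We have $G\cap F=\{3,4\}\not\subseteq F\setminus\{4\}$. Moreover, the only single-pair shift of $F$ that stays in $\mathcal F$ is $\{2,3,5,6\}$, and it does not contain $G\cap F$.

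\textbf{The missing idea is that an earlier neighbour of $F$ is in general obtained by pushing a whole tight block one step left, not a single pair.} For each $\ell$, let $k\le\ell$ be minimal with $a_{j+1}=a_j+2$ for all $k\le j<\ell$. Then $L(F,a_\ell+1)=(F\setminus\{a_\ell+1\})\cup\{a_k-1\}$ shifts pairs $k,\dots,\ell$ down by one. It lies in $\mathcal F$, and below $F$ in $\le_p$, precisely when $a_k\ge 2$, i.e.\ when $a_\ell+1$ is not in the left set of $F$.

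The correct restriction face is therefore
$X_F=\{a_\ell+1 : a_\ell+1 \text{ is not in the left set of } F\}$.
In the example this is $\{4,6\}$, witnessed by $\{2,3,5,6\}$ and $\{2,3,4,5\}$. These are the same $L(F,\cdot)$ witnesses used in the $U_1(I)$ part of the proof of Theorem~\ref{prop:generalized_shelling}.

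With this choice, Condition~2 needs no case analysis. Suppose the left set of $F$ consists of $s$ pairs and $G=(b_1,b_1+1,\dots,b_t,b_t+1)\supseteq X_F$.
\begin{itemize}
\item The $t-s$ elements of $X_F$ are pairwise at distance at least $2$. So they lie in distinct pairs of $G$, with indices $j_{s+1}<\dots<j_t\le t$.
\item Hence $j_\ell\le \ell$, and $b_\ell\ge b_{j_\ell}\ge a_\ell$ for $\ell>s$.
\item For $\ell\le s$ we have $b_\ell\ge 2\ell-1=a_\ell$.
\end{itemize}
Thus $G\ge_p F$, which is impossible for a facet strictly preceding $F$ in a linear extension of $\le_p$. (The paper gives no proof of this statement; it cites Kalai.)
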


\begin{theorem}[\cite{leeKalaisSqueezedSpheres2000}]
Every squeezed sphere $S(I)$ is shellable.
\end{theorem}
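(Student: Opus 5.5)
I would prove that $S(I)$ is shellable by induction on $n$ and $t$. The induction uses the natural decomposition of $B(I)$ according to whether the last vertex $n$ is used. The first step is to describe the facets of $S(I)=\partial B(I)$ explicitly. A ridge of $B(I)$ has the form $F\setminus\{x\}$ with $F=(a_1,a_1+1,\dots,a_t,a_t+1)\in\mathcal{F}(I)$ and $x\in F$. A short computation shows which other sets of $\mathcal{F}$ contain $F\setminus\{x\}$: removing $a_\ell$ can only be compensated by the pair starting at $a_\ell+1$ or at $a_\ell-1$ when these are admissible, and similarly for removing $a_\ell+1$. This yields a concrete criterion, in terms of $\le_p$ and the initial set $\mathcal{F}(I)$, for $F\setminus\{x\}$ to lie on the boundary. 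Roughly, $F\setminus\{a_\ell+1\}$ is a boundary ridge when the point $F+e_\ell$ is not in $\mathcal{F}(I)$. Likewise, $F\setminus\{a_\ell\}$ is a boundary ridge when $F-e_\ell$ is not an admissible point. Here the two ends $a_1=1$ and $a_t+1=n$ have to be treated with care.

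Next I would split $\mathcal{F}(I)$ into two parts. The first, $\mathcal{A}$, consists of the facets with $a_t+1=n$. These are exactly $\{n-1,n\}*G$ with $G$ running over a squeezed-type family $\mathcal{F}(I')$ in $\mathcal{F}^{[1,n-2]}_{2t-2}$. The second, $\mathcal{B}$, consists of the facets contained in $[1,n-1]$, which form a squeezed ball $B(I'')$ on $n-1$ vertices. Then $\partial B(I)$ is the union of three pieces:
\begin{enumerate}
\item $\partial B(I'')$ minus the ridges glued to $\mathcal{A}$;
\item $\{n-1,n\}*\partial B(I')$;
\item the cones $\{n-1\}*G$ and $\{n\}*G$ over those $G\in\mathcal{F}(I')$ that are not glued to $\mathcal{B}$.
\end{enumerate}
By induction, both $S(I'')$ and $S(I')$ are shellable. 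From a shelling of $S(I')$ one obtains shellings of $\{n-1,n\}*S(I')$, since joins of shellable complexes with a simplex are shellable. The Kalai shelling of $B(I')$ supplies shelling orders for the cone parts.

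The shelling of $S(I)$ would then be built in three stages. I would first shell the part of $S(I)$ lying in $[1,n-1]$, in the order inherited from a shelling of $S(I'')$. Next come the new ridges containing exactly one of $n-1,n$, in the Kalai order of $\mathcal{F}(I')$. Finally come the ridges containing $\{n-1,n\}$, in the inherited order from $S(I')$. For each facet I would verify the restriction face criterion (Lemma~\ref{lemma:restriction-face-criterion}), using the explicit ridge description from the first step to identify the unique minimal new face. In effect, this generalizes the restriction faces $\{a_\ell+1 : F+e_\ell\in\mathcal{F}(I)\}$ of Kalai's shelling.

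I expect the main obstacle to be the transition between stages. The ridges of $\partial B(I'')$ that were interior to $B(I'')$'s boundary before attaching $\mathcal{A}$, i.e.\ those that disappear or reappear, destroy the naive inherited order. One must check that at each transition the newly added facet meets the earlier ones in a pure codimension-one complex. If the inherited order fails there, my first fallback is to reorder so that the facets of $\partial B(I'')$ adjacent to $\mathcal{A}$ are shelled last in the first stage. This is possible if they form a shellable ball. I would try to show that by identifying them with $\{n-1\}*B(I')$ restricted to $[1,n-1]$, which is itself squeezed and hence shellable by Kalai's theorem.
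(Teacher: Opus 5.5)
\textbf{There is a genuine gap: the step that carries all the difficulty in your approach (your stage 1) is not proved, and the fallback you offer for it is not a valid argument.}

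Your decomposition by the vertex $n$ is sound and can be made exact. For $F=G\cup\{n-1,n\}\in\mathcal A$, the ridge $\{n-1\}\cup G$ is always glued to $L(F,n)\in\mathcal B$, and the ridge $\{n\}\cup G$ always lies on the boundary. So, with $D\coloneq\{n-1\}*B(I')\subseteq S(I'')$,
\[
S(I)=\bigl(S(I'')\setminus D\bigr)\cup\bigl(\{n\}*\partial D\bigr),\qquad \partial D=B(I')\cup\{n-1\}*\partial B(I').
\]
Two corrections to your first step. Your third piece is just $\{n\}*B(I')$. Also, your ridge criterion is swapped. Whether $F\setminus\{a_\ell+1\}$ is a boundary ridge is decided by $F-e_\ell$, i.e.\ by $L(F,a_\ell+1)$. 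Whether $F\setminus\{a_\ell\}$ is one is decided by $F+e_\ell$, i.e.\ by $R(F,a_\ell)$, as in Proposition~\ref{prop:facets_squeezed_sphere}. Correspondingly, Kalai's restriction faces are $\{a_\ell+1: F-e_\ell \text{ admissible}\}$.

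Your stages 2 and 3 are fine. Kalai's shelling of $B(I')$ followed by $\{n-1\}*S(I')$ shells the sphere $\partial D$, and its cone with apex $n$ can be appended to any shelling of the ball $S(I'')\setminus D$. In the paper's language, Lemma~\ref{lemma:prefix-replacement} with $\Delta_1=D$ and $\Delta_2=\{n\}*\partial D$ reduces everything to one claim: some shelling of $S(I'')$ has $D$ as an initial segment. By Lemma~\ref{lemma:reversibility}, this is the same as $D$ being a final segment.

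That claim is where all the work lies, and you do not prove it. The induction hypothesis only gives you some shelling of $S(I'')$. Deleting the facets of $D$ from an arbitrary shelling does not leave a shelling of $S(I'')\setminus D$, as you note yourself.

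Your fallback says $D$ can be shelled last if its facets form a shellable ball. That is not a valid principle. Shelling $D$ last means the complementary ball $S(I'')\setminus D$ is an initial segment, hence shellable. Shellability of $D$ gives no control over that complement, and balls can fail to be shellable.

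To make your route work you would need a strengthened induction hypothesis, roughly: for every initial set $\mathcal K\subseteq\mathcal F^{[1,n-2]}_{2t-2}$ with $\{n-1\}*B(\mathcal K)\subseteq S(I'')$, there is a shelling of $S(I'')$ beginning with $\{n-1\}*B(\mathcal K)$. You would then need to prove it. That in turn seems to require shelling the link of $n-1$ starting from $B(\mathcal K)$, which is a genuinely new statement.

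The argument the paper relies on (Lee's) avoids this by writing down the explicit global order $<_{RL,\,RL}$ of Definition~\ref{def:generalized_order}. First comes $U_1(I)$, where $F-a_k$ has restriction face $\{a_i: i>k,\ i\text{ even}\}$. Then comes $U_2(I)$ in the order of conditions 4--7, with restriction faces checked directly. Your $D$ sits inside $U_2(I'')$, which is exactly the delicate part of that argument.
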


The following notation will be helpful when working with faces (most commonly facets) of these complexes:

\begin{notation}[\cite{leeKalaisSqueezedSpheres2000}]
    Let $\mathbb N$ denote the set of non-negative integers. Consider some element $j \in F \subset [n]$. Define 
    $$\ell(F, j) \coloneq \max\{i \in \mathbb N : i < j, i \not\in F\},$$ and $L(F, j) \coloneq (F-j)\cup \ell(F, j).$ Thus $L(F, j)$ is the set resulting from `pushing' $j$ and its immediate predecessors one step to the left. Similarly, define 
    $$r(F, j) \coloneq \min\{i \in \mathbb N : i > j, i \not\in F\},$$ and $R(F, j)\coloneq (F-j)\cup r(F,j).$ Thus $R(F, j)$ is the set resulting from `pushing' $j$ and its immediate successors one step to the right.
\end{notation}

\subsection{Relative squeezed balls and spheres}

Let $d = 2t-1$, and let $I$ be an antichain in $\mathcal{F}$. We define $I - 1$ as the antichain $\{x - \bm{1}_{2t} : x \in I, x_1 > 1\},$
where $\bm{1}_{2t}$ denotes the all-ones vector of length $2t$.

We say that two antichains $I$ and $J$ in $\mathcal{F}$ satisfy $J \leq_p I$ if $\mathcal{F}(J) \subseteq \mathcal{F}(I)$. Similarly, we say that 
$$J \prec_p I \text{ if } \mathcal{F}(J) \subseteq \mathcal{F}(I-1).$$

Now we can define relative squeezed balls and relative squeezed spheres following \cite{novikManyNeighborlySpheres2024}: Let $J \prec_p I$. The simplicial complex $B(I, J) \coloneq B(I)\setminus B(J)$, generated by the facets of $B(I)$ that do not belong to $B(J)$, is called a \emph{relative squeezed ball} and its facets can be plotted in $\mathbb R^t$ as in Figure \ref{fig:squeezed_ball_and_relative_squeezed_ball}b. Its boundary complex $S(I, J) \coloneq \partial\bigl(B(I) \setminus B(J)\bigr)$ is called a \emph{relative squeezed sphere}. The following result is relevant:

\begin{theorem}[\cite{novikManyNeighborlySpheres2024}]
The simplicial complex $B(I, J)$ is a ball, and it is constructible. 
\end{theorem}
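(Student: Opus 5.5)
The plan is to prove constructibility first, by induction on $t$ and a slicing of the facets along the first coordinate. The ball property then follows from a general fact: a constructible $d$-dimensional pseudomanifold in which every ridge lies in at most two facets, and some ridge lies in exactly one, is a $d$-ball. This is the standard result of Danaraj--Klee/Björner type for constructible complexes, proved by induction along the constructible decomposition using that gluing two balls along a common boundary ball yields a ball. The pseudomanifold condition for $B(I,J)$ is inherited from $B(I)$, which is a ball, so every ridge of $B(I,J)$ lies in at most two of its facets. Some ridge lies in only one, e.g.\ a boundary ridge of the facet that is maximal in a linear extension of $\le_p$.

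For the slicing, write each facet of $B(I,J)$ as $F=\{a_1,a_1+1\}\cup F'$ with $F'\in \mathcal{F}^{[a_1+2,n]}_{2t-2}$. For a fixed value $c$ of $a_1$, let $I_c$ (resp.\ $J_c$) be the antichain of maximal elements among the tails $F'$ of facets in $\mathcal{F}(I)$ (resp.\ $\mathcal{F}(J)$) with $a_1=c$. The slice is then the join $\overline{\{c,c+1\}} * B(I_c,J_c)$, where $B(I_c,J_c)$ is a relative squeezed ball in dimension $2t-3$ on the vertex set $[c+2,n]$. One checks that $J\prec_p I$ implies $J_c\prec_p I_c$: shifting an element of $J_c$ by $\bm 1$ stays inside $I_c$, because shifting the corresponding element of $J$ does and the first coordinate $c$ is untouched, after re-indexing. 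The base case $t=1$ is a path of edges $\{a,a+1\}$, which is clearly constructible, and a join of a simplex with a constructible complex is constructible. So by induction every slice is constructible.

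Next I will glue the slices in increasing order of $c$: $\Delta_c=\Delta_{c-1}\cup\bigl(\overline{\{c,c+1\}}*B(I_c,J_c)\bigr)$. The key computation is the intersection. A face of the new slice lies in an earlier facet $\{c',c'+1\}\cup G'$ with $c'<c$ only if it omits $c$ or $c+1$. By Gale-type parity reasoning, the relevant $(d-1)$-faces should be exactly those of the form $\{c+1\}\cup F'$, obtained from the earlier facet $L(F,c+1)=\{c-1,c+1\}\cup F'$ wait — more precisely from facets $\{c-1,c\}\cup F'$ via the "pushing'' operation. I therefore expect the intersection to be
\[
\overline{\{c+1\}} * \bigl(B(I_c,J_c)\cap B(I_{c-1},J_{c-1})\bigr)',
\]
a cone over an appropriate relative squeezed ball (the slice-$c$ tails that also occur in slice $c-1$). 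Monotonicity of the slices in $c$ (downward closure of $\mathcal{F}(I)$ gives $I_c\le_p I_{c-1}$) and the condition $J\prec_p I$ are what make the difference of antichains again of relative squeezed type. That intersection is then constructible of dimension $d-1$ by the induction hypothesis, and the cone over it is constructible as well.

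The main obstacle is this intersection step. One must show that the intersection is pure of dimension $d-1$, with no stray lower-dimensional faces coming from slices $c'<c-1$. One must also show that it is indeed a (cone over a) relative squeezed ball, rather than a difference of squeezed balls violating $\prec_p$; the latter is precisely where removing $B(J)$ could create a non-ball. My first attempt would be to prove purity directly with the $L(F,j)$ operation: any common face $\tau$ missing $c$ extends to a ridge $F\setminus\{c\}$ of a facet $F$ in slice $c$ whose left push $L(F,c)$ is in $\mathcal{F}(I)\setminus\mathcal{F}(J)$. Lying in $\mathcal{F}(I)$ is downward closure. Not lying in $\mathcal{F}(J)$ is where $J\prec_p I$ is used: if $L(F,c)\in\mathcal{F}(J)$, then its shift lies in $\mathcal{F}(I)$, which I expect to force $F\in\mathcal{F}(J)$, a contradiction. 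If this fails in degenerate cases (e.g.\ $c=1$, or an empty $J_c$), those cases reduce to an ordinary squeezed ball slice, which is shellable by Kalai's theorem and hence constructible.
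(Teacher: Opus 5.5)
\textbf{There is a genuine gap in the gluing step.} Your reduction to ``a constructible pseudomanifold with nonempty boundary is a ball'' is fine. So is the slice description $S_c=\overline{\{c,c+1\}}*B(I_c,J_c)$ with $J_c\prec_p I_c$, up to one small correction: shifting a $J$-element by $\mathbf{1}$ moves its first pair to $\{c+1,c+2\}$, so you need downward closure of $\mathcal F(I)$ to get back to $\{c,c+1\}$.

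The gluing step is the one you flag as the main obstacle, and you do not carry it out. What you do write there is incorrect.
\begin{itemize}
\item Across a ridge of $F=\{c,c+1\}\cup F'$, the other facet of $\partial C_{2t}(n)$ is unique. For $F\setminus\{c\}=\{c+1\}\cup F'$ that facet is $R(F,c)$, whose minimum is $c+1$. So this face never lies in $\Delta_{c-1}$.
\item $L(F,c)=\{c-1,c+1\}\cup F'$ violates Gale evenness, so it is not a facet at all. Your purity argument via $L(F,c)$ therefore has nothing to work with.
\item The ridges of $F$ that can meet earlier slices are exactly $F\setminus\{a_{2i}\}$ with $a_{2i}$ in the initial run of $F$. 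Such a ridge is glued to $L(F,a_{2i})$, which has minimum $c-1$, exactly when $L(F,a_{2i})\notin B(J)$. Equivalently, these ridges are the sets $H\setminus\{c-1\}$ for facets $H$ of $B(I,J)$ with $\min H=c-1$ and $R(H,c-1)\in B(I)$.
\item All of these ridges contain $c$, so the apex is $c$, not $c+1$.
\item For $i\ge 2$ they also contain $c+1$, so the link is not given by tails common to the two slices. For instance, with $t=2$, $\{c,c+1,c+2,c+3\}$ and $\{c-1,c,c+1,c+2\}$ share the ridge $\{c,c+1,c+2\}$, even though their tails $\{c+2,c+3\}$ and $\{c+1,c+2\}$ differ.
\end{itemize}

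\textbf{What a repair would need.} There are three missing pieces.
\begin{enumerate}
\item[(i)] Identify the link of $c$ correctly: it is the complex of tails $H'$ with $\{c-1,c\}\cup H'\notin\mathcal F(J)$ and $R(\{c-1,c\}\cup H',c-1)\in\mathcal F(I)$. Then show it is again a relative squeezed ball $B(\tilde I)\setminus B(J_{c-1})$ with $J_{c-1}\prec_p\tilde I$. This does hold: pushing the initial run to the right is monotone for a fixed first pair, and $J\prec_p I$ gives the rest. Only then does your induction on $t$ apply.
\item[(ii)] Prove that $\Delta_{c-1}\cap S_c$ is pure. Every face $\tau$ of $S_c$ contained in a facet $G$ of an earlier slice must lie in one of the ridges above. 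This includes slices $c'\le c-2$, which share no ridges with $S_c$ but do share lower-dimensional faces. It needs its own argument: for $\tau\subseteq F\cap G$, construct a facet $H$ with $\min H=c-1$, $\tau\subseteq H$, $H\notin B(J)$, and $R(H,c-1)\in B(I)$.
\item[(iii)] Check that the intersection is nonempty whenever $S_c\neq\emptyset$ and $c\ge 2$.
\end{enumerate}
None of this is in the proposal, so constructibility is not established, and with it the ball property.

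\textbf{How the paper gets the result.} The paper does not reprove this cited theorem directly. Its closing remark recovers it from the explicit shelling $<_B$ of Section 4. A shellable complex is constructible, and $B(I)\setminus B(J)$ is a proper full-dimensional subcomplex of $\partial C_{2t}(n)$, hence a pseudomanifold with boundary, so shellability makes it a ball.
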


\begin{figure}[h!]
    \centering
    \begin{subfigure}{0.35\textwidth}
        \centering
        \includegraphics[width=\linewidth]{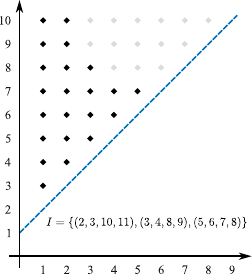}
        \caption{Diagram of a squeezed ball.}
    \end{subfigure}
    \hspace{2cm}
    \begin{subfigure}{0.35\textwidth}
        \centering
        \includegraphics[width=\linewidth]{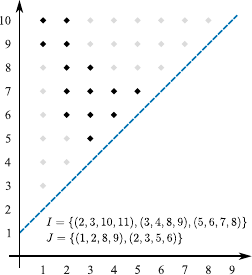}
        \caption{Diagram of a relative squeezed ball.}
    \end{subfigure}
    \caption{Diagrams of facets in $\mathcal F^{[1,11]}_4$, plotted as integer lattice points above the line $x+1 < y$.} 
    \label{fig:squeezed_ball_and_relative_squeezed_ball}
\end{figure}

\section{Shellability of relative squeezed spheres}

We begin this section by establishing two well-known shellability results that will be key elements in our proof. We then provide a characterization of the facets of a squeezed sphere $S(I)$, and use it to state a generalized version of Lee's shelling order for $S(I)$  and to prove that this generalized order is still a shelling. Next, we define a total order $\leq_J$ on $\mathcal F$ extending $\leq_p$, which will be used in the construction of a shelling order for $S(I,J)$. Finally, we provide a characterization of the facets of a relative squeezed sphere $S(I,J)$, establish a connection between the facets of $S(I, J)$ and the facets of $S(I)$ and $S(J)$, and use this, together with well-known shellability results, to prove that $S(I,J)$ is shellable.

We start by recalling two standard shellability results whose proofs can be found in the Appendix.

\begin{numlemma}{\ref{lemma:prefix-replacement}}{\rm(Prefix Replacement)}
    Let $\Delta$ be a shellable simplicial sphere or a shellable simplicial ball with a shelling order $F_1, \ldots, F_s$ and let $\Delta_1$ be the simplicial ball generated by an initial segment $F_1, \ldots, F_m$ of this shelling. If $\Delta_2$ is a shellable simplicial ball with $\partial \Delta_2 = \partial \Delta_1$, then $\Delta' \coloneq \Delta_2 \cup (\Delta \setminus \Delta_1)$ is shellable.
\end{numlemma}

\begin{numlemma}{\ref{lemma:reversibility}}{\rm(Reversibility)}
    If $F_1, F_2, \cdots, F_s$ is a shelling order for a simplicial sphere, then so is the reverse order $F_s, F_{s-1}, \cdots, F_1$.
\end{numlemma}

Now using the definition of the boundary complex, we can characterize the facets of a squeezed sphere as follows:

\begin{propositionDefinition}\label{prop:facets_squeezed_sphere}
    Suppose $d$ is odd and $B(I)$ is a squeezed $d$-ball. Let $F = (a_1, \ldots, a_{d+1})$ be a facet of $B(I)$. Then $F - a_i$ is a facet of $S(I)$ if and only if one of the following cases holds.
    \begin{enumerate}
        \item The index $i$ is even and $L(F, a_i) \not \in B(I)$. In this case $a_i$ must be in the left set of $F$. The set of all facets of $S(I)$ that satisfy this condition will be called $U_1(I)$. 
        \item The index $i$ is odd and $R(F, a_i) \not \in B(I)$. The set of all facets of $S(I)$ that satisfy this condition will be called $U_2(I)$.
    \end{enumerate} 
\end{propositionDefinition}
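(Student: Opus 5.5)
The plan is to use the fact that $B(I)$ is a full-dimensional subcomplex of the simplicial $d$-sphere $\partial C_{d+1}(n)$. In a simplicial sphere every ridge lies in exactly two facets. Hence for a facet $F$ of $B(I)$, the ridge $F - a_i$ lies in exactly one facet of $B(I)$, and so is a facet of $S(I)=\partial B(I)$, if and only if the \emph{other} facet of $\partial C_{d+1}(n)$ containing $F-a_i$ is not in $B(I)$. So the whole proof reduces to two things: identifying that other facet explicitly, and deciding when it lies in $\mathcal F(I)$.

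Write $F=(b_1,b_1+1,\dots,b_t,b_t+1)$, so that $a_{2k-1}=b_k$ and $a_{2k}=b_k+1$. The vertices of $F$ split into maximal runs of consecutive integers. Since $F\in\mathcal F$, every run has even length and is a union of the pairs $\{b_k,b_k+1\}$.

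\emph{Odd index.} Suppose $i$ is odd, so $a_i=b_k$ is the first element of a pair. I claim the other facet is $R(F,a_i)=(F-a_i)\cup r(F,a_i)$, where $r(F,a_i)$ is the first non-element of $F$ after the run containing $a_i$. To verify this, I check Gale's evenness condition for $R(F,a_i)$: deleting $a_i$ and appending one element at the right end of the same run changes no parity between pairs of non-elements. Since a ridge of a sphere lies in exactly two facets, $R(F,a_i)$ is then the unique other one. Moreover, when $r(F,a_i)\le n$, every run of $R(F,a_i)$ is still even and $R(F,a_i)\ge_p F$, so $R(F,a_i)\in\mathcal F$. If $r(F,a_i)=n+1$, then $R(F,a_i)$ is not a subset of $[n]$, and we adopt the convention that it is not in $B(I)$. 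In either case $F-a_i\in S(I)$ if and only if $R(F,a_i)\notin B(I)$, which gives case 2.

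\emph{Even index.} Suppose $i$ is even, so $a_i=b_k+1$. By the symmetric argument, the other facet is $L(F,a_i)=(F-a_i)\cup\ell(F,a_i)$, where $\ell(F,a_i)$ is the last non-element before the run containing $a_i$. If $\ell(F,a_i)\ge 1$, then $L(F,a_i)$ again has all runs even, so it lies in $\mathcal F$. It also satisfies $L(F,a_i)\le_p F$: each pair in the run shifts left by one or stays fixed. Since $\mathcal F(I)$ is an initial set, this gives $L(F,a_i)\in B(I)$, and so $F-a_i$ is interior. Consequently $F-a_i$ can be a boundary facet only when $\ell(F,a_i)=0$, i.e., the run containing $a_i$ starts at $1$, i.e., $a_i$ lies in the left set of $F$. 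In that situation $L(F,a_i)$ is not a subset of $[n]$, so $L(F,a_i)\notin B(I)$ and $F-a_i\in S(I)$. This gives case 1 together with the remark that $a_i$ lies in the left set.

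The step needing the most care is the Gale-evenness verification that $R(F,a_i)$ and $L(F,a_i)$ are genuinely facets of $C_{d+1}(n)$ whenever $r\le n$ or $\ell\ge1$, including the case where several pairs are adjacent in one long run. It also matters that "$\notin B(I)$" is interpreted to include the degenerate cases $r(F,a_i)=n+1$ and $\ell(F,a_i)=0$. Once the other facet is identified, both directions of the equivalence follow from the two-facets-per-ridge property of $\partial C_{d+1}(n)$.
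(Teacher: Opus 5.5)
There is a genuine gap, confined to the degenerate cases $\ell(F,a_i)=0$ and $r(F,a_i)=n+1$. Your reduction is sound: $F-a_i$ lies in $S(I)$ iff the second facet of the sphere $\partial C_{d+1}(n)$ through $F-a_i$ is not in $B(I)$. This is exactly the ``definition of the boundary complex'' argument the paper relies on without writing it out. Your non-degenerate analysis is also correct. For $r(F,a_i)\le n$ the second facet is $R(F,a_i)$. For $\ell(F,a_i)\ge 1$ it is $L(F,a_i)$, which lies in $\mathcal F$ with $L(F,a_i)\le_p F$, hence in $B(I)$, so those even-index ridges are interior.

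In the degenerate cases, $L(F,a_i)$ (resp.\ $R(F,a_i)$) contains $0$ (resp.\ $n+1$). So it is \emph{not} the second facet of $\partial C_{d+1}(n)$ containing $F-a_i$. That facet is some other $2t$-subset of $[n]$, and you never identify it or show it lies outside $B(I)$. The convention ``$L(F,a_i)\notin B(I)$'' only makes the right-hand side of the equivalence true. It says nothing about whether $F-a_i$ lies in a second facet of $B(I)$. So the implication ``$a_i$ in the left set $\Rightarrow F-a_i\in S(I)$'' is asserted, not proved, and likewise its analogue for odd $i$ when the run of $a_i$ reaches $n$. Your sentence ``so $L(F,a_i)\notin B(I)$ and $F-a_i\in S(I)$'' is a non sequitur.

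The repair is short. Suppose $i$ is even and $a_i$ is in the left set. Then $[1,a_i]\subseteq F$, and $a_i-1$ is odd because the pairs there are $\{1,2\},\{3,4\},\dots$. Any facet $G\neq F$ of $B(I)$ containing $F-a_i$ has the form $(F-a_i)\cup\{v\}$ with $v\notin F$, so $v>a_i$. The left set of $G$ is then exactly $[1,a_i-1]$, of odd size, so $G\notin\mathcal F$, a contradiction. Symmetrically, suppose $i$ is odd and $r(F,a_i)=n+1$. Then $[a_i,n]\subseteq F$ with $n-a_i$ odd, and any such $G$ has right set $[a_i+1,n]$ of odd size. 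Concretely, the missing facet of $\partial C_{d+1}(n)$ is the ``wrap-around'' one: $(F-a_i)\cup\{\max([n]\setminus F)\}$, resp.\ $(F-a_i)\cup\{\min([n]\setminus F)\}$. It satisfies Gale's condition but has odd left and right sets. With this added, both directions of the equivalence hold in all cases.
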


In Figure \ref{fig:squeezed_ball_diagram_boundary} we illustrate the elements of $S(I)$ as follows:
Let $F$ and $G$ be two facets in $\mathcal{F}$ that appear as adjacent dots in the diagram. Consider the $t$-dimensional unit cubes that enclose the dots associated to $F$ and $G$. The common face shared by these cubes in the diagram will be used to denote the intersection $F \cap G$. Note that since $F$ and $G$ are adjacent dots in the diagram, it follows that $|F \cap G| = 2t-1$. Therefore these new elements of the diagram will always correspond to ridges. In Figure \ref{fig:squeezed_ball_diagram_boundary}, we plot the ridges that form the boundary of $B(I)$ and use different colors to differentiate the elements that belong to $U_1(I)$ from the elements that belong to $U_2(I)$. 

\begin{figure}[h!]
    \centering
    \includegraphics[width=0.35\linewidth]{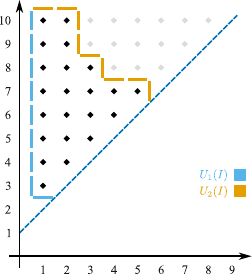}
    \caption{Diagram of $B(I)$ and its boundary $S(I) = U_1(I) \cup U_2(I)$.}
    \label{fig:squeezed_ball_diagram_boundary}
\end{figure}

Consider the following ordering for the facets of a squeezed sphere:

\begin{definition} \label{def:generalized_order}
Suppose $d$ is odd and $B(I)$ is a squeezed $d$-ball. Let $F = (a_1, \ldots, a_{d+1})$ and $G = (b_1, \ldots, b_{d+1})$ be facets of $B(I)$, let $<_{tot}$ denote an arbitrary total order that extends the standard partial order $<_p$ on $\mathcal F$, let $<_{RL}$ be the revlex order on $\mathcal F$, and let $F - a_k$ and $G - b_\ell$ be facets of $S(I)$. Set $r = \min\{j: a_i=b_i \text{ for all } i \geq j\}$. We say that $(G - b_\ell) <_{tot,\ RL} (F - a_k)$ if one of the following conditions is satisfied:
    \begin{enumerate}
        \item $k$ is odd and $\ell$ is even.
        \item $k$ and $\ell$ are both even and $G <_{tot} F$.
        \item $k$ and $\ell$ are both even, $G = F$, and $\ell > k$.
        \item $k$ and $\ell$ are both odd, $k = \ell \geq r$, and $G <_{RL} F$.
        \item $k$ and $\ell$ are both odd and $\ell > k \geq r$.
        \item $k$ and $\ell$ are both odd and $\ell \geq r > k$.
        \item $k$ and $\ell$ are both odd, $k,\ell < r$, and $F <_{RL} G$. 
    \end{enumerate}
\end{definition}

\begin{remark}\label{rmk:u1_before_u2}
    It follows from Condition 1 of Definition \ref{def:generalized_order} that, in the ordering of the facets of $S(I)$ given by $<_{tot,\ RL}$, the facets that belong to $U_1(I)$ come before the facets that belong to $U_2(I)$.
\end{remark}

Lee's main result in \cite{leeKalaisSqueezedSpheres2000} can then be stated as follows:

\begin{theorem}[\cite{leeKalaisSqueezedSpheres2000}]
    If $d$ is odd and $S(I)$ is a squeezed $(d-1)$-sphere, then $<_{RL,\ RL}$ is a shelling order of its facets.
\end{theorem}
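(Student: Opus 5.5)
The plan is to apply the Restriction Face Criterion (Lemma~\ref{lemma:restriction-face-criterion}) directly. For every facet $G-b_\ell$ of $S(I)$ listed in the order $<_{RL,\ RL}$, I will write down an explicit candidate restriction face and check the two conditions of Definition~\ref{def:restriction-face}.

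The first tool is that $S(I)$ is a pseudomanifold: each ridge $(G-b_\ell)-x$ lies in exactly two facets of $S(I)$. So for each $x\in G-b_\ell$ I will name the \emph{other} facet $H_x$ containing $(G-b_\ell)-x$. This uses Proposition-Definition~\ref{prop:facets_squeezed_sphere} and the push operations $L(\cdot,\cdot)$ and $R(\cdot,\cdot)$. There are two ways $H_x$ can arise:
\begin{itemize}
\item $H_x=G-x$, when $x=b_m$ and the index parity and push condition for $m$ put $G-b_m$ in $S(I)$;
\item $H_x=G'-y$ with $G'\neq G$ a neighbouring facet of $B(I)$, for instance $G'=L(G,b_m)$ or $R(G,b_m)$, or the facet obtained by pushing $b_\ell$ back into $G$.
\end{itemize}
The candidate restriction face is then
\[
X=\{x : H_x <_{RL,\ RL} G-b_\ell\}.
\]
With this choice, condition~1 of Definition~\ref{def:restriction-face} holds by construction. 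The real content is condition~2: no earlier facet contains all of $X$.

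I will treat $U_1(I)$ and $U_2(I)$ separately, following Remark~\ref{rmk:u1_before_u2}.

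\textbf{The part $U_1(I)$.} Here $\ell$ is even and $b_\ell$ lies in the left set of $G$, so $G=(1,2,\dots,2s,b_{2s+1},\dots)$ with $b_\ell\le 2s$. Earlier facets are of two kinds: those from $G'<_{RL}G$, and those from $G$ itself with a larger even index. I expect the following description of $X$:
\begin{itemize}
\item $X$ contains the left-set vertices $b_m$ with $m$ even and $m>\ell$;
\item $X$ contains, for each pair $\{b_j,b_j+1\}$ outside the left set, the vertex $b_j+1$ whenever pushing that pair down stays in $B(I)$ (it then gives an $<_{RL}$-smaller facet).
\end{itemize}
Condition~2 should follow from a short argument. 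Any earlier facet $G'-y$ that contains $X$ would force $G'\ge G$ coordinatewise on the pairs recorded in $X$, which contradicts $G'<_{RL}G$. If instead $G'=G$, then $y$ would have to be one of the removed vertices already in $X$.

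\textbf{The part $U_2(I)$.} Here all the indices involved are odd, and the order is governed by the index $r$ at which $F$ and $G$ begin to agree. Conditions~4--7 say the following. For $k,\ell\ge r$ one orders by decreasing index, and for equal index one orders by revlex. For $k,\ell<r$ the revlex comparison is reversed. This matches the geometry: a facet of $U_2$ is the "top" boundary face of its $B(I)$-facet in direction $i$, reached by pushing right. The ridge between two such faces comes from a common neighbour in $B(I)$ via a left push, and the order is arranged so that the neighbour attached to higher coordinates is seen first. I will compute $H_x$ for each $x$ according to whether $x$ sits above or below position $\ell$. Since the earlier facets include all of $U_1(I)$, which forms a ball, part of the work is checking that the boundary ridges of $U_1(I)$ are correctly picked up by the first $U_2$ facets.

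I expect the main obstacle to be condition~2 in the $U_2$ case with indices below $r$ (Condition~7), where the revlex order is reversed. There a naive candidate $X$ can lie in some facet $F-a_k$ with $F$ revlex-larger but differing only in low coordinates. My first attempt is to place into $X$ the vertices $b_j$ with $j<\ell$ odd such that $L(G,b_j)\in B(I)$, together with $b_\ell+1$ when it is present. I would then show by induction on the first differing coordinate that any earlier $F-a_k$ misses one of these vertices. If this breaks down, the fallback is to shell $U_1(I)$ and $U_2(I)$ as two balls with common boundary and join the two shellings using Lemma~\ref{lemma:reversibility}.
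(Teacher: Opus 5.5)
Your treatment of $U_1(I)$ is fine. It is exactly the part of Lee's argument that the paper reproduces in the proof of Theorem~\ref{prop:generalized_shelling}. The restriction face of $G-b_\ell$ is $\{b_m : m>\ell,\ m \text{ even}\}$, and an earlier $G'-y$ containing it would force $G'\geq_p G$. The ``whenever'' in your second bullet is automatic: pushing a pair outside the left set gives a $<_p$-smaller element of $\mathcal F$, hence one in $B(I)$.

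\textbf{The gap is the $U_2(I)$ half.} This is the real content of Lee's theorem; the paper does not reprove it but cites Lee. Defining $X=\{x : H_x \text{ is earlier}\}$ makes condition~1 automatic only by making condition~2 equivalent to the theorem itself. The one concrete candidate you offer for $U_2(I)$ is wrong:
\begin{itemize}
\item It has at most $(\ell+1)/2\le t$ elements. But the last facet of any shelling of a sphere, which here lies in $U_2(I)$, must have the whole facet, of size $2t-1>t$ for $t\ge 2$, as its restriction face.
\item Concretely, take $t=2$, $n\ge 5$ and $I=\{\{1,2,3,4\}\}$, so $B(I)$ is a single tetrahedron. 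Then $<_{RL,\ RL}$ lists $\{1,2,3\},\{1,3,4\},\{1,2,4\},\{2,3,4\}$.
\item Your candidates for $\{1,2,4\}=G-b_3$ and $\{2,3,4\}=G-b_1$ are $\{4\}$ and $\{2\}$. These lie in the earlier facets $\{1,3,4\}$ and $\{1,2,3\}$.
\item The true restriction faces are $\{2,4\}$ and $\{2,3,4\}$.
\end{itemize}
So your planned induction on the first differing coordinate has nothing correct to start from. What is missing is the correct, much larger, restriction faces for $U_2(I)$, adapted to the index $r$ and to the reversal of revlex in Condition~7 of Definition~\ref{def:generalized_order}, together with a proof of condition~2 for them.

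The fallback does not prove the statement either. You could follow a shelling of $\overline{U_1(I)}$ by the reverse of a shelling of the ball $\overline{U_2(I)}$. This can be shown to shell $S(I)$, though by an argument different from Lemma~\ref{lemma:reversibility}, which concerns spheres. But you have no shelling of $\overline{U_2(I)}$, and even with one you would only get that $S(I)$ is shellable. The statement is that the specific order $<_{RL,\ RL}$ is a shelling. To recover that from the fallback, you would have to show that the reverse of $<_{RL,\ RL}$ restricted to $U_2(I)$ shells $\overline{U_2(I)}$, which is the same unresolved problem.
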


The following stronger statement holds:

\begin{theorem} \label{prop:generalized_shelling}
    If $d$ is odd and $S(I)$ is a squeezed $(d-1)$-sphere, then $<_{tot,\ RL}$ is a shelling order of its facets for any total ordering $<_{tot}$ extending the partial order $<_p$ on $\mathcal F$.
\end{theorem}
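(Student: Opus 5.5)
We verify the shelling via the Restriction Face Criterion (Lemma \ref{lemma:restriction-face-criterion}). By Remark \ref{rmk:u1_before_u2}, the order $<_{tot,\ RL}$ first lists all of $U_1(I)$ and then all of $U_2(I)$. Within $U_2(I)$ the comparisons (Conditions 4--7) involve only the revlex order and the indices $k,\ell,r$, never $<_{tot}$. So the plan is to show that Lee's arguments for $U_2(I)$ go through unchanged, and that only the $U_1(I)$ part, together with the way $U_2(I)$ attaches to it, needs new work.

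\textbf{Step 1 (the $U_1$ part).} A facet of $U_1(I)$ has the form $F-a_i$ with $i$ even, $a_i$ in the left set of $F$, and $L(F,a_i)\notin B(I)$. Since $F\in\mathcal F(I)$ and $L(F,a_i)\notin B(I)$, the left set of $F$ must be nonempty, say $F=(1,2,\dots,2p,\dots)$. The admissible indices $i$ are then the even positions $2,4,\dots,2p$ inside the left block. For $F-a_k$ I will propose as restriction face the set of vertices $a_j$ with $j$ even, $j>k$ in the left block, together with the odd-indexed partners in the rest of $F$ that correspond to predecessors $G<_pF$ obtained by decreasing one coordinate $a_\ell$ of $F$. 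I will check the two conditions of Definition \ref{def:restriction-face}.
\begin{itemize}
\item \emph{Existence of earlier facets.} Each such $G$ satisfies $G<_p F$, hence $G<_{tot}F$, so $G$ comes earlier by Condition 2. Ridges of the same $F$ with larger even index come earlier by Condition 3.
\item \emph{Exclusion.} No earlier $U_1$-facet contains the restriction face. Every earlier facet $G-b_\ell$ has $G<_{tot}F$ or $G=F$, $\ell>k$. I will argue that containment would force $G\geq_p F$ coordinatewise, or force $G=F$ with $\ell\le k$.
\end{itemize}
The crucial point is that only the relation $G<_pF$ is used here, never revlex specifically. Any linear extension $<_{tot}$ therefore works, exactly as in Kalai's shelling of $B(I)$. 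In fact the restriction faces of $U_1$ facets should mirror Kalai's restriction faces for $B(I)$, which are determined by $<_p$-predecessors.

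\textbf{Step 2 (the $U_2$ part).} For a facet $F-a_k\in U_2(I)$, Lee's restriction face is built from the facets before it. These are all of $U_1(I)$, which is independent of the chosen total order as a set, plus $U_2$-facets ordered by Conditions 4--7, which do not depend on $<_{tot}$. Hence the complex preceding $F-a_k$ is the same as in Lee's order $<_{RL,\ RL}$. The restriction face of $F-a_k$ depends only on that preceding complex, so Lee's restriction faces remain valid and are independent of $<_{tot}$.

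This step is also where the main danger lies. Lee's proof may use revlex-specific structure of $U_1(I)$ when checking the first restriction-face condition for $U_2$ facets, for example by using that certain $U_1$ ridges precede others. I will check that only membership in $U_1(I)$ is used there. If it is not, the fallback is the Prefix Replacement lemma: $U_1(I)$ with order $<_{tot}$ and $U_1(I)$ with revlex order are two shellings of the same ball with the same boundary. Replacing the initial segment of Lee's shelling then yields the theorem directly, provided Step 1 establishes that $<_{tot}$ shells the ball $U_1(I)$.

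The main obstacle is therefore Step 1: producing a uniform restriction-face description for $U_1(I)$ that uses only $<_p$. I expect it to follow from the bijection $F-a_k\leftrightarrow$ pairs $(F,k)$ and Kalai's argument for shelling $B(I)$ by any linear extension.
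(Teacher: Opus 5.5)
Your Step 2 is sound. Condition 1 of Definition \ref{def:generalized_order} places all of $U_1(I)$ before $U_2(I)$, and Conditions 4--7 never consult $<_{tot}$. So every facet of $U_2(I)$ has the same set of predecessors as in Lee's order $<_{RL,\ RL}$, and its shelling condition is inherited. This is exactly what the paper extracts from the Prefix Replacement Lemma.

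The gap is Step 1, which is the whole content of the theorem. You leave it as an expectation, and the restriction face you describe is incorrect. For a facet $F-a_k\in U_1(I)$, where $k$ is even and $a_1,\ldots,a_k=1,\ldots,k$, the restriction face is forced to be $X=\{a_i : i>k,\ i\text{ even}\}$. Your description goes wrong in two ways:
\begin{itemize}
\item Outside the left block one needs the \emph{even}-indexed vertices, not odd-indexed ones.
\item One needs them in every pair, including pairs inside a contiguous run that cannot be shifted on their own. The witness for such an $a_i$ is $L(F,a_i)-a_k$, where $L$ pushes the whole run one step to the left, which decreases several coordinates at once.
\end{itemize}
Concretely, take $t=3$, $F=(1,2,4,5,6,7)$ (e.g.\ $I=\{F\}$, $n=8$) and $k=2$. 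The restriction face of $\{1,4,5,6,7\}$ is $\{5,7\}$. Your recipe yields $\{4\}$, or $\{5\}$ if ``partner'' is read as the even vertex. Both lie in the earlier facet $(1,2,3,4,5,6)-2=\{1,3,4,5,6\}$, so the second condition of Definition \ref{def:restriction-face} fails.

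With the correct $X$, the first condition holds in two ways:
\begin{itemize}
\item If $a_i$ lies in the left block, use Condition 3 of Definition \ref{def:generalized_order}: the facet $F-a_i$ itself comes earlier.
\item Otherwise use Condition 2. Here $L(F,a_i)<_pF$ lies in $B(I)$, and $L(F,a_i)-a_k\in U_1(I)$ because $a_k=k$ stays at position $k$ inside the (possibly enlarged) left set.
\end{itemize}

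The exclusion step you only announce needs an actual argument, and it holds only for this $X$. Consecutive elements of $X$ differ by at least $2$, so they occupy distinct pairs $\{b_{2p-1},b_{2p}\}$ of any $G\in\mathcal F$ containing $X$. Counting the later elements of $X$ shows that $a_{k+2s}$ sits in a pair of index at most $k/2+s$. Hence $b_{k+2s-1}\geq a_{k+2s}-1=a_{k+2s-1}$. The first $k/2$ pairs of $F$ are $(1,2),\ldots,(k-1,k)$, which are minimal. Therefore $G\geq_pF$, so $G<_{tot}F$ is impossible. If instead $G=F$ with $\ell>k$, then $b_\ell\in X$ lies outside $G-b_\ell$.

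Your instinct to mirror Kalai's shelling is right. $X$ consists of the even-indexed left-block vertices beyond $a_k$ together with Kalai's restriction face of $F$ in $B(I)$, namely the even-indexed vertices outside the left set. What your description gets wrong is that Kalai's restriction faces consist of even-indexed vertices, not odd-indexed ones.
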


\begin{proof}

    Notice that with this generalized ordering $<_{tot, \ RL}$, the facets $U_2(I)$ remain at the end of the shelling and in the same order as with $<_{RL,\ RL}$. Therefore, by the Prefix Replacement Lemma (see Lemma \ref{lemma:prefix-replacement}), it is enough to show that the new ordering of the facets in the prefix $U_1(I)$ is still a shelling. 

    The remainder of the proof is identical to the first half of the proof in \cite[Theorem 1]{leeKalaisSqueezedSpheres2000}:

    We will show that for each facet $F - a_k$ of $U_1(I)$ there is a subset $X$ that satisfies the two conditions from Definition \ref{def:restriction-face}. That is, 
    \begin{enumerate}
    \item for each $a_i \in X$ there is an earlier facet of $S(I)$ containing $(F - a_k) - a_i$.
    \item $X$ itself is not contained in any earlier facet.
    \end{enumerate}
    Let $F \in B(I)$ and $F-a_k$ be a facet of $U_1(I)$.
    Note that by the definition of $U_1(I)$, $k$ is even. Let $X = \{a_i : i > k$ and $i$ is even$\}$. 
    We first prove that $X$ satisfies the first condition. Take any $a_i \in X$. If $a_i$ is in the left set of $F$, then $F - a_i$ is an earlier facet of $S(I)$ (by condition 3) that contains $(F - a_k) - a_i$. On the other hand, if $a_i$ is not in the left set of $F$, then $L(F, a_i) \in B(I)$ and $L(F,a_i)-a_k$ is an earlier facet of $S(I)$ (by condition 2) that contains $(F-a_k)-a_i$. Hence $X$ satisfies the first condition.
    Now assume that $G - b_\ell$ is an earlier facet of $S(I)$ that contains $X$. Since $k$ is even, it follows that $(G - b_\ell) <_{tot,\ RL} (F - a_k)$ via condition 2 or 3. Condition 2 is impossible since $G <_{tot} F$ implies $G$ cannot contain $X$. However, condition 3 is also impossible, because $F = G$ and $\ell > k$ implies $b_\ell \in X$. Therefore, $X$ also satisfies the second condition.
\end{proof}

\begin{definition} \label{def:J_order}
    Let $J \prec_p I$ and let $F$ and $G$ be facets of $B(I)$. Let $<_{RL}$ denote the revlex order. We say that $G <_{J} F$ if one of the following conditions holds:
    \begin{enumerate}
        \item $G \in B(J)$ and $F \not \in B(J)$.
        \item $G$ and $F$ are both in $B(J)$ and $G <_{RL} F$.
        \item $G$ and $F$ are both in $B(I) \setminus B(J)$ and $G <_{RL} F$.
    \end{enumerate}
\end{definition}

\begin{proposition} \label{prop:J_order_extends_partial}
    The order $\leq_J$ on the facets of $B(I)$ is a total order that extends the partial order $\leq_p$.
\end{proposition}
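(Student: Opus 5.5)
Two things need checking: that $\leq_J$ is a total order on the facets of $B(I)$, and that $G \leq_p F$ implies $G \leq_J F$. For totality we use the partition of the facets of $B(I)$ into those in $B(J)$ and those in $B(I)\setminus B(J)$. The order $\leq_J$ is the ordinal sum of these two blocks: the first block precedes the second, and each block carries the restriction of the revlex order $<_{RL}$. Revlex is a strict total order on $d$-subsets of $[n]$, and an ordinal sum of two total orders is total. So any two distinct facets are comparable under exactly one of Conditions 1--3 of Definition~\ref{def:J_order}. Irreflexivity and transitivity of $<_J$ reduce to those of $<_{RL}$ together with the fact that block membership is preserved inside each block. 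This part is routine.

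For the extension property, let $G \leq_p F$ with $G \neq F$, both facets of $B(I)$. We first show that revlex extends $\leq_p$ on $\mathcal F$. Write $F=(a_1,a_1+1,\dots,a_t,a_t+1)$ and $G=(b_1,b_1+1,\dots,b_t,b_t+1)$ with $b_\ell\le a_\ell$ for all $\ell$. Let $m$ be the largest index with $b_m\neq a_m$, so $b_m<a_m$ and the top blocks of $F$ and $G$ agree above index $m$. The largest element of the symmetric difference $F \triangle G$ is then $a_m+1$, or $a_m$ if $b_m+1=a_m$; in either case it lies in $F$. By the definition of revlex (compare largest differing elements), this gives $G <_{RL} F$. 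I would state this once carefully, checking the overlapping case $b_m+1=a_m$, since the revlex convention should match the one used by Lee.

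Now split into cases according to block membership. If $G,F$ are both in $B(J)$, or both in $B(I)\setminus B(J)$, then Conditions 2 or 3 together with $G<_{RL}F$ give $G<_J F$. If $G\in B(J)$ and $F\notin B(J)$, Condition 1 applies. The remaining case, $F\in B(J)$ and $G\notin B(J)$, cannot occur: $\mathcal F(J)$ is an initial set (a downward closure), so $G\leq_p F\in\mathcal F(J)$ forces $G\in\mathcal F(J)$, which means $G\in B(J)$. For this we use that the facets of $B(J)$ are exactly $\mathcal F(J)$. The hypothesis $J\prec_p I$ is needed only to guarantee $\mathcal F(J)\subseteq\mathcal F(I)$, so that the blocks partition the facets of $B(I)$. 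The main point to be careful about is the revlex-extends-$\leq_p$ lemma; everything else is bookkeeping.
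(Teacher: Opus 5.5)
Your proof is correct and follows the paper's argument: totality comes from the block structure, and the extension property is the same case split on membership in $B(J)$, using downward closure of $\mathcal F(J)$ and the fact that revlex extends $\leq_p$ (the paper asserts this fact, and you verify it). One small fix in that verification: the largest element of $F \triangle G$ is always $a_m+1$, because when $b_m+1=a_m$ the element $a_m$ lies in both sets; your conclusion that this largest element lies in $F$ is unaffected.
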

\begin{proof}
    It is clear that for any $F$ and $G$ in $B(I)$, either $F \leq_J G$ or $G \leq_J F$, so $<_J$ is a total order. To see that it extends $<_p$, assume $G <_p F$. It is enough to show that $G <_J F$.
    
    If $F \in B(J)$, then the assumption $G<_p F$ implies that $G$ is also in $B(J)$. Since the revlex order extends the partial order and $G <_p F$, then $G <_{RL} F$. In this case, $G <_J F$ by condition 2.
    
    Assume $F \not \in B(J)$. If $G \in B(J)$, it follows that $G <_J F$ by condition 1. If $G \not \in B(J)$, then since the revlex order extends the partial order, we have $G <_J F$ by condition 3. 
\end{proof}
\begin{remark}
    Note that the revlex order was chosen arbitrarily in Definition \ref{def:J_order} and replacing it with any other total order extending the partial order would result in an order that still satisfies Proposition \ref{prop:J_order_extends_partial}.
\end{remark}

\begin{proposition} \label{prop:J_shelling}
    Let $J \prec_p I$ and let $U_1(I)$ and $U_2(I)$ be the sets defined in Proposition \ref{prop:facets_squeezed_sphere}. We define
    $$U_{11}(I,J) \coloneq \{F - a_i \in U_1(I) : F \in B(J)\},$$
    $$U_{12}(I,J) \coloneq \{F - a_i \in U_1(I) : F \not \in B(J)\}.$$
    There exists a shelling order of $S(I)$ such that the facets in $U_{11}(I,J)$ come first, the facets in $U_{12}(I,J)$ come second, and the facets in $U_2(I)$ come third.
\end{proposition}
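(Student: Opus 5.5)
The plan is to obtain the desired order as a special case of Theorem \ref{prop:generalized_shelling}, taking $<_{tot}$ to be the order $<_J$ of Definition \ref{def:J_order}. By Proposition \ref{prop:J_order_extends_partial}, $\leq_J$ is a total order on the facets of $B(I)$, that is, on $\mathcal{F}(I)$, and it extends $\leq_p$. Theorem \ref{prop:generalized_shelling} then applies, so $<_{J,\ RL}$ is a shelling order of the facets of $S(I)$. It remains to check that this shelling lists the three blocks in the required order.

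First, by Remark \ref{rmk:u1_before_u2}, which is Condition 1 of Definition \ref{def:generalized_order}, every facet of $U_1(I)$ precedes every facet of $U_2(I)$. The sets $U_{11}(I,J)$ and $U_{12}(I,J)$ partition $U_1(I)$, according to whether the underlying facet $F$ lies in $B(J)$.

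There is one subtlety here: a facet of $S(I)$ might arise as $F-a_k = G-b_\ell$ from two different facets $F,G$ of $B(I)$. This cannot happen, since $S(I)$ is the boundary of the ball $B(I)$, so each boundary ridge lies in exactly one facet of $B(I)$. Hence each facet of $S(I)$ determines its $F$ uniquely, the membership in $U_{11}$ or $U_{12}$ is well defined, and the two blocks are disjoint.

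Now take $G-b_\ell\in U_{11}(I,J)$ and $F-a_k\in U_{12}(I,J)$. Both indices $k$ and $\ell$ are even. Since $G\in B(J)$ and $F\notin B(J)$, Condition 1 of Definition \ref{def:J_order} gives $G<_J F$. Condition 2 of Definition \ref{def:generalized_order} therefore yields $(G-b_\ell)<_{J,\ RL}(F-a_k)$. So all of $U_{11}$ precedes all of $U_{12}$, and the shelling lists $U_{11}(I,J)$, then $U_{12}(I,J)$, then $U_2(I)$.

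The main obstacle has already been handled by Theorem \ref{prop:generalized_shelling}, so no real difficulty remains. The only point requiring care is the well-definedness of the underlying facet $F$ for each element of $U_1(I)$, which the boundary-of-a-ball argument above settles.
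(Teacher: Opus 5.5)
Your proof is correct and is essentially the paper's argument: you instantiate Theorem~\ref{prop:generalized_shelling} with $<_{tot} = <_J$, then use Condition 1 of Definition~\ref{def:generalized_order} to place $U_1(I)$ before $U_2(I)$, and Condition 2 together with Condition 1 of Definition~\ref{def:J_order} to place $U_{11}(I,J)$ before $U_{12}(I,J)$. Your extra remark is valid and makes explicit what the paper leaves implicit: each facet of $S(I)$ lies in a unique facet $F$ of $B(I)$, so the split into $U_{11}$ and $U_{12}$ is well defined.
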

\begin{proof}
    Let $<_J$ be the total order defined in Definition \ref{def:J_order}. Then $<_{J,\ RL}$ is a shelling order of $\partial B(I)$ by Theorem \ref{prop:generalized_shelling}. 

    Let $F-a_k$ be an arbitrary facet in $U_{11}(I,J)$, let $G-b_\ell$ be an arbitrary facet in $U_{12}(I,J)$, and let $H-c_m$ be an arbitrary facet in $U_2(I)$. It follows that 
    \begin{itemize}
        \item $(F-a_k) <_{J,\ RL} (G-b_\ell)$ by condition 2 of Definition \ref{def:generalized_order}. Therefore, the facets in $U_{11}(I,J)$ come before the facets in $U_{12}(I,J)$.
        \item $(G-b_\ell) <_{J,\ RL} (H-c_m)$ by condition 1 of Definition \ref{def:generalized_order}. Therefore, the facets in $U_{12}(I,J)$ come before the facets in $U_2(I)$.
    \end{itemize}
\end{proof}

Now, using the definition of the boundary complex, we can characterize the facets of a relative squeezed sphere as follows:

\begin{propositionDefinition}\label{prop:facets_relative_squeezed_spheres}
Suppose $B(I,J)$ is a relative squeezed $d$-ball, with $d$ odd and $J \prec_p I$. Let $F$ be a facet of $B(I,J)$. Then $F - a_i$ is a facet of $\partial B(I,J)$ if and only if one of the following cases holds:
\begin{enumerate}
    \item The index $i$ is even and $L(F, a_i) \not \in B(I) \setminus B(J)$. In this scenario there are two cases:
    \begin{enumerate}
        \item $L(F, a_i) \in B(J)$. The set of all facets of $\partial B(I,J)$ that satisfy this condition will be called $W_1(I,J)$.
        \item $L(F, a_i) \not \in B(J)$, and so $L(F,a_i) \not \in B(I)$. In this case $a_i$ must be in the left set of $F$. The set of all facets of $\partial B(I,J)$ that satisfy this condition will be called $W_2(I,J)$.  
    \end{enumerate}
    \item The index $i$ is odd and $R(F, a_i) \not \in B(I) \setminus B(J)$. In this scenario we must have $R(F, a_i) \not \in B(I)$. The set of all facets of $\partial B(I,J)$ that satisfy this condition will be called $W_3(I,J)$.
\end{enumerate} 
\end{propositionDefinition}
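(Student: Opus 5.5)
The plan is to reduce the claim to the description of the boundary of the squeezed ball $B(I)$ given in Proposition~\ref{prop:facets_squeezed_sphere}, combined with the fact that $B(I,J)$ is a ball (so each ridge lies in one or two facets). A ridge $F-a_i$ of $B(I,J)$, with $F=(a_1,\dots,a_{2t})\in B(I)\setminus B(J)$, is a boundary ridge exactly when it lies in no other facet of $B(I)\setminus B(J)$.

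\textbf{Step 1: the unique candidate neighbour.} I first identify the only facet of $C_{2t}(n)$, other than $F$, that contains $F-a_i$ and could be a facet of $B(I)$.
\begin{itemize}
\item If $i$ is even, it is $L(F,a_i)$.
\item If $i$ is odd, it is $R(F,a_i)$.
\end{itemize}
This is exactly the content underlying Proposition~\ref{prop:facets_squeezed_sphere}. All facets of $B(I)$ lie in $\mathcal F$ and have the paired form $(a_1,a_1+1,\dots)$, and $F-a_i$ misses one element of a pair. Hence the other member of $\mathcal F$ through $F-a_i$ is obtained by shifting that pair, together with any adjacent pairs blocking it, to the left (for even $i$) or to the right (for odd $i$); this is precisely $L(F,a_i)$, respectively $R(F,a_i)$.

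The only subtlety is the case where $L(F,a_i)$ is undefined or falls outside $[n]$ (when $a_i$ is in the left set, $\ell(F,a_i)=0$). In that case we declare $L(F,a_i)\notin B(I)$. The same convention applies to $R$ at the right end; note $R$ would give $n+1$.

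It follows that $F-a_i\in\partial B(I,J)$ if and only if the candidate neighbour is not in $B(I)\setminus B(J)$. This gives the two top-level cases, 1 and 2, of the statement.

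\textbf{Step 2: the subcases.} For $i$ even, split according to whether $L(F,a_i)\in B(J)$; this gives $W_1(I,J)$ and $W_2(I,J)$. In the second subcase $L(F,a_i)\notin B(I)\setminus B(J)$ and $L(F,a_i)\notin B(J)$, so $L(F,a_i)\notin B(I)$. I then need to show $a_i$ lies in the left set. Since $L(F,a_i)\le_p F$ whenever $L(F,a_i)$ is a genuine element of $\mathcal F$, and $B(I)$ is an initial set, $L(F,a_i)\notin B(I)$ forces $L(F,a_i)$ not to be a valid element. That happens only when the pushed block reaches $1$, i.e. $a_i$ lies in the left set. This is the same argument as for $U_1(I)$.

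\textbf{Step 3: the main obstacle, odd $i$.} For $i$ odd I must show that $R(F,a_i)\in B(J)$ is impossible. The key is that $R(F,a_i)\ge_p F$, because the pairs move rightwards. Since $B(J)$ is an initial set, $R(F,a_i)\in B(J)$ would give $F\in B(J)$, contradicting $F\in B(I)\setminus B(J)$. Hence $R(F,a_i)\notin B(I)\setminus B(J)$ forces $R(F,a_i)\notin B(I)$, which defines $W_3(I,J)$.

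The point needing care is the precise form of $R(F,a_i)$ for odd $i$. Removing the lower element $a_i$ of a pair and pushing right turns the pairs $a_i,\dots$ into shifted pairs, so every coordinate weakly increases and $R(F,a_i)\ge_p F$ coordinatewise. I expect verifying this shape claim (and the mirrored one for $L$) to be the only real work. The converse directions in each case are immediate from Step 1.
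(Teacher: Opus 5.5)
Your proposal is correct and takes the route the paper has in mind. The paper gives no written proof beyond ``using the definition of the boundary complex'', in parallel with Proposition and Definition~\ref{prop:facets_squeezed_sphere}, and your argument unpacks exactly that: you identify $L(F,a_i)$ or $R(F,a_i)$ as the only other possible facet through $F-a_i$, and you justify the side claims using $L(F,a_i)\le_p F\le_p R(F,a_i)$ (when these are genuine elements of $\mathcal F$) together with the down-closedness of $B(I)$ and $B(J)$, which supplies the details the paper leaves implicit.
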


Recall that the notation $U_1, U_2$ was defined in Definition \ref{prop:facets_squeezed_sphere}, the notation $U_{11}, U_{12}$ was defined in Proposition \ref{prop:J_shelling}, and the notation $W_1, W_2, W_3$ was defined in Definition \ref{prop:facets_relative_squeezed_spheres}. The boundary of a relative squeezed ball, as well as all these sets, is illustrated in Figure \ref{fig:relative_squeezed_ball_diagram_boundary} below. We also illustrate a few equalities among these sets that will be formally proven in Proposition \ref{prop:matching_sets}.

\begin{figure}[h!]
    \centering
    \includegraphics[width=0.35\linewidth]{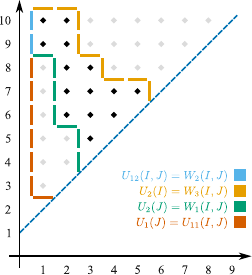}
    \caption{Diagram of $B(I,J)$ and its boundary $S(I, J) = W_1(I,J) \cup W_2(I,J) \cup W_3(I,J)$.}
    \label{fig:relative_squeezed_ball_diagram_boundary}
\end{figure}

The following observation establishes a connection between the facets of $S(I, J)$ and the facets of $S(I)$ and $S(J)$.
\begin{proposition} \label{prop:matching_sets} Suppose $B(I,J)$ is a relative squeezed $d$-ball, with $d$ odd and $J \prec_p I$. The following equalities hold:
    \begin{enumerate}
        \item $W_1(I, J) = U_2(J)$.
        \item $W_2(I, J) = U_{12}(I,J)$.
        \item $W_3(I,J) = U_2(I)$.
        \item $U_{11}(I,J) = U_1(J).$
    \end{enumerate}
\end{proposition}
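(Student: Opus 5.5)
The plan is to verify each equality by double inclusion. Throughout, a ridge of one of these spheres is written as $F - a_i$, and I will use the characterizations in Proposition and Definition \ref{prop:facets_squeezed_sphere} (applied to both $B(I)$ and $B(J)$) and Proposition and Definition \ref{prop:facets_relative_squeezed_spheres}. The basic tool is that a ridge $F-a_i$ of a ball in $\mathcal F$ lies in at most two facets of $\mathcal F$: $F$ itself and the pushed set $L(F,a_i)$ (for $i$ even) or $R(F,a_i)$ (for $i$ odd). Unwinding the definitions, most of the equalities then reduce to statements about which of $B(I)$, $B(J)$, $B(I)\setminus B(J)$ contain $F$ and its pushed neighbour. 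Two further ingredients are needed. The first is that these are down-sets for $\le_p$, and $L(F,a_i) <_p F$ while $R(F,a_i) >_p F$. The second is the hypothesis $J \prec_p I$, which I will use in the form: if $G\in B(J)$ then $G+\bm 1\in B(I)$, and in particular every $\le_p$-successor of $G$ obtained by a single unit push lies in $B(I)$.

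For (3), take $F-a_i\in W_3(I,J)$. Then $F\in B(I)\setminus B(J)$, $i$ is odd, and $R(F,a_i)\notin B(I)$, so $F-a_i\in U_2(I)$. Conversely, let $F-a_i\in U_2(I)$, so $F\in B(I)$, $i$ is odd and $R(F,a_i)\notin B(I)$. I must show $F\notin B(J)$. If $F\in B(J)$, the prec hypothesis puts $F+\bm 1$ in $B(I)$, and $R(F,a_i)\le_p F+\bm 1$, which is a contradiction. This comparison $R(F,a_i)\le_p F+\bm 1$ is exactly the statement in Proposition \ref{prop:facets_relative_squeezed_spheres} that $R(F,a_i)\notin B(I)$, and I will take it as proved there.

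For (2), both sets consist of the ridges $F-a_i$ with $F\in B(I)\setminus B(J)$, $i$ even, and $L(F,a_i)\notin B(I)$. For $W_2$ this holds because $L(F,a_i)\notin B(J)$ forces $L(F,a_i)\notin B(I)$. In the other direction, $L\notin B(I)$ implies $L\notin B(J)$. When $a_i$ is in the left set, $L(F,a_i)$ is not even in $\mathcal F$, and both descriptions agree. Item (4) is analogous. An element of $U_{11}(I,J)$ has $F\in B(J)$ and $L(F,a_i)\notin B(I)\supseteq B(J)$, so it lies in $U_1(J)$. Conversely, an element of $U_1(J)$ has $L(F,a_i)\notin B(J)$ with $a_i$ in the left set. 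Then $L(F,a_i)\notin\mathcal F$, hence $L(F,a_i)\notin B(I)$, and since $F\in B(J)\subseteq B(I)$ we get an element of $U_{11}$.

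Item (1) is where I expect the main obstacle, because it identifies ridges labelled from different facets. A ridge of $W_1(I,J)$ is written $F-a_i$ with $F\notin B(J)$, $i$ even, and $G:=L(F,a_i)\in B(J)$. I would show that $F-a_i=G-b_j$ for the index $j$ of the pushed vertex, namely $b_j=\ell(F,a_i)$. This index is odd, since pushing left within a pair $(a,a+1)$ changes the odd-position entry, and $R(G,b_j)=F$. Since $F\notin B(J)$, this gives $G-b_j\in U_2(J)$. Conversely, take $G-b_j\in U_2(J)$, so $G\in B(J)$ and $F:=R(G,b_j)\notin B(J)$. By the prec hypothesis $F\le_p G+\bm 1$, so $F\in B(I)\setminus B(J)$. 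Moreover $L(F,\cdot)$ recovers $G$ at an even index, which places the ridge in $W_1$. The delicate part is the index bookkeeping: checking that $L$ and $R$ are mutually inverse on the correct positions when a push shifts a whole block of consecutive vertices, and that the even/odd parity swaps correctly. I would handle this by writing $F=(a_1,a_1+1,\dots)$ explicitly and treating separately the cases where the pushed vertex's block is a single pair and where it merges adjacent pairs.
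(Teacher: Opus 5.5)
Your proposal is correct and follows essentially the same route as the paper. Each item is a double inclusion: (2) and (4) reduce to $B(J)\subseteq B(I)$ and the left-set characterization, (3) uses $B(J)\subseteq B(I-1)$ to rule out $F\in B(J)$, and (1) uses that $L$ and $R$ invert each other with a parity swap, together with $J\prec_p I$ to place $R(G,b_j)$ in $B(I)$.

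One small correction in (3): the comparison $R(F,a_i)\le_p F+\bm{1}$ is not what Proposition~\ref{prop:facets_relative_squeezed_spheres} asserts, since its claim $R(F,a_i)\notin B(I)$ comes from $R(F,a_i)>_p F$ and $F\notin B(J)$. The comparison instead follows directly from $R$ shifting a block of pairs one step right, as you already observed, once you note that $F+\bm{1}\in\mathcal F$ keeps $a_i$ out of the right set.
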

\begin{proof}
    For (1), let $F - a_i \in W_1(I,J)$. Let $F' = L(F, a_i)$ and $a'_j = \ell(F, a_i)$. Note that $F' - a'_j = F - a_i$. Furthermore, $F' \in B(J)$, $j$ is odd, $R(F', a'_j) \not \in B(J)$.  This means that $F - a_i=F' - a'_j \in U_2(J)$, and so $W_1(I,J) \subseteq U_2(J)$.

    Now let $F' - a'_j \in U_2(J)$. Let $F = R(F', a'_j)$ and $a_i = r(F, a'_j)$. Note that $F \not \in B(J)$, $i$ is even, and $L(F,a_i) \in B(J)$. Furthermore, since $F' \in B(J) \subseteq B(I-1)$, it follows that $F \in B(I)$. This means that $F'-a'_j = F-a_i \in W_1(I,J)$, and so $U_2(J) \subseteq W_1(I,J)$.

    For the set equalities (2), (3) and (4), the sets are equal by definition:

    For (2), let $F - a_i \in W_2(I,J)$. Then $F \in B(I) \setminus B(J)$, $i$ is even, and $L(F, a_i) \not \in B(I)$. Since $F \in B(I)$, $i$ is even, and $L(F, a_i) \not \in B(I)$, it follows that $F-a_i \in U_1(I)$. Since $F \not \in B(J)$, we conclude that $F - a_i \in U_{12}(I,J)$. The other inclusion also follows since all the implications are reversible.

    For (3), let $F - a_i \in W_3(I,J)$. Then $F \in B(I)\setminus B(J)$, $i$ is odd, and $R(F, a_i) \not \in B(I)$. Since $F \in B(I)$, $i$ is odd, and $R(F, a_i) \not \in B(I)$, it follows that $F - a_i \in U_2(I)$. 
    
    Now let $F - a_i \in U_2(I)$. Then $F \in B(I)$, $i$ is odd, and $R(F, a_i) \not \in B(I)$. Note that $F$ cannot be in $B(J) \subseteq B(I-1)$, because in that case $R(F, a_i)$ would be in $B(I)$. Since $F \in B(I) \setminus B(J)$, $i$ is odd, and $R(F, a_i) \not \in B(I)$, we conclude that $F - a_i \in W_3(I, J)$.

    For (4), let $F - a_i \in U_{11}(I,J)$. Then $F - a_i \in U_1(I)$ and $F \in B(J)$. The condition $F-a_i\in U_1(I)$ implies that $i$ is even and $a_i$ is in the left set of $F$. Since $F \in B(J)$, $i$ is even, and $a_i$ is in the left set of $F$, we obtain that $F-a_i \in U_1(J)$. The other inclusion also follows since all the implications are reversible.
\end{proof}

With Proposition \ref{prop:matching_sets} and Lemmas \ref{lemma:prefix-replacement} and \ref{lemma:reversibility} in hand, we are ready to prove the main result of this section. 
\begin{theorem}
    Suppose $d$ is odd and $J \prec_p I$. Let $B(I,J)$ be a relative squeezed $d$-ball. Then, the relative squeezed sphere $\partial B(I,J)$ is shellable.
\end{theorem}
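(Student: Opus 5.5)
The approach is to build a shelling of $S(I,J)=W_1(I,J)\cup W_2(I,J)\cup W_3(I,J)$ from shellings of the two squeezed spheres $S(I)$ and $S(J)$, combining Proposition \ref{prop:J_shelling} with the identifications of Proposition \ref{prop:matching_sets}. By Proposition \ref{prop:J_shelling}, $S(I)$ has a shelling in which the facets of $U_{11}(I,J)$ come first, then those of $U_{12}(I,J)$, then those of $U_2(I)$. By Proposition \ref{prop:matching_sets}(4), $U_{11}(I,J)=U_1(J)$. Hence the initial segment $U_{11}(I,J)$ of this shelling generates the complex $\Delta_1\coloneq\overline{U_1(J)}$. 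Since it is an initial segment of a shelling of a sphere, it is a shellable ball. Moreover,
\[
S(I)\setminus\Delta_1=\overline{U_{12}(I,J)\cup U_2(I)}=\overline{W_2(I,J)\cup W_3(I,J)}
\]
by Proposition \ref{prop:matching_sets}(2),(3).

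Next I replace this prefix by $\Delta_2\coloneq\overline{U_2(J)}=\overline{W_1(I,J)}$, using Proposition \ref{prop:matching_sets}(1). The sphere $S(J)=\overline{U_1(J)}\cup\overline{U_2(J)}$ is shellable by Lee's theorem, and by Remark \ref{rmk:u1_before_u2} the facets of $U_1(J)$ precede those of $U_2(J)$ in that shelling. By Lemma \ref{lemma:reversibility} the reversed order is also a shelling of $S(J)$, and in it $U_2(J)$ forms an initial segment. Therefore $\Delta_2$ is a shellable ball, as an initial segment of a shelling of a sphere.

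To apply Lemma \ref{lemma:prefix-replacement} I must check that $\partial\Delta_2=\partial\Delta_1$. Both $\Delta_1$ and $\Delta_2$ are complementary balls inside the sphere $S(J)$: their facets partition the facets of $S(J)$, and $U_1(J)\cap U_2(J)=\emptyset$ as sets of facets. Each ridge of the pseudomanifold $S(J)$ lies in exactly two facets. A ridge of $\Delta_1$ therefore lies in exactly one facet of $\Delta_1$ precisely when its other facet lies in $\Delta_2$, and the same holds with the roles reversed. So $\partial\Delta_1=\Delta_1\cap\Delta_2=\partial\Delta_2$. Lemma \ref{lemma:prefix-replacement} then shows that
\[
\Delta_2\cup\bigl(S(I)\setminus\Delta_1\bigr)=\overline{W_1(I,J)}\cup\overline{W_2(I,J)\cup W_3(I,J)}=\partial B(I,J)
\]
is shellable. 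Explicitly, the shelling lists $U_2(J)$ in reversed Lee order, and then $U_{12}(I,J)$ followed by $U_2(I)$ in the order $<_{J,\,RL}$.

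I expect the main obstacle to be the bookkeeping behind the facet partitions, not the gluing itself. One must check that $U_1(J)$ and $U_2(J)$ are disjoint, which holds because no ridge of $S(J)$ is listed twice. One must also check that the facet set of $\partial B(I,J)$ is exactly the disjoint union $W_1\cup W_2\cup W_3$, so that the union above introduces no facet twice and misses none. This disjoint union comes from Proposition-Definition \ref{prop:facets_relative_squeezed_spheres} together with Proposition \ref{prop:matching_sets}, and it confirms that the resulting complex really equals $S(I,J)$ rather than merely containing it. A secondary point is that initial segments of shellings of spheres are balls, provided the segment is proper; the degenerate cases where $U_1(J)$ or $U_2(J)$ is empty, or $J=\emptyset$, should be handled separately and are trivial.
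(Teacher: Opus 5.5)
Your proposal is correct and follows the paper's proof essentially step for step. You reverse a shelling of $S(J)$ so that $\overline{U_2(J)}$ becomes an initial shellable ball, use Proposition \ref{prop:J_shelling} to make $\overline{U_{11}(I,J)}=\overline{U_1(J)}$ a prefix of a shelling of $S(I)$, and then apply the Prefix Replacement Lemma together with Proposition \ref{prop:matching_sets} to identify the result with $\partial B(I,J)$; your ridge-counting argument for $\partial\overline{U_1(J)}=\partial\overline{U_2(J)}$ just spells out what the paper asserts in one line.
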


\begin{proof}
    By Remark \ref{rmk:u1_before_u2}, there exists a shelling order of $\partial B(J)$ such that all the facets in $U_1(J)$ come before all the facets in $U_2(J)$. Since this is a shelling of a sphere, reversing this order results in another shelling (see Lemma \ref{lemma:reversibility}) in which $U_2(J)$ is an initial segment, and so $\overline{U_2(J)}$ is shellable. Furthermore, since $\overline{U_1(J) \cup U_2(J)}$ is a sphere, $\overline{U_1(J)}$ and $\overline{U_2(J)}$ must be balls with the same boundary. 

    By Proposition \ref{prop:J_shelling}, there exists a shelling order of $\partial B(I)$ such that the facets in $U_{11}(I,J)$ come first, the facets in $U_{12}(I,J)$ come second, and the facets in $U_2(I)$ come third. 
    
    Since earlier we established that $\partial \overline{U_2(J)} = \partial \overline{U_1(J)}$, and since $\partial\overline{U_1(J)} = \partial \overline{U_{11}(I,J)}$ by Proposition \ref{prop:matching_sets}, we obtain that
    $$\partial \overline{U_2(J)} = \partial \overline{U_{11}(I,J)}.$$
    Therefore, Lemma \ref{lemma:prefix-replacement} implies that the complex $(\partial B(I) \setminus \overline{U_{11}(I,J)}) \cup \overline{U_2(J)}$ is a shellable sphere. However, by Proposition \ref{prop:matching_sets}, this complex is precisely
    \begin{align*}
        (\partial B(I) \setminus \overline{U_{11}(I,J)}) \cup \overline{U_2(J)} &= \overline{U_{2}(J) \cup U_{12}(I,J)\cup U_2(I)}\\
        &= \overline{W_1(I,J) \cup W_2(I,J) \cup W_3(I,J)}\\
        &= \partial B(I,J).
    \end{align*} 
    We conclude that  $\partial B(I, J)$ is a shellable sphere, as desired.
\end{proof}

\section{Shellability of relative squeezed balls}

We begin this section by introducing some notation:

\begin{notation}
    Let $F\coloneq(a_1, \ldots, a_{2t})$ be an element of $\mathcal F^{[1,n]}_{2t}$ and let $i$ and $j$ be integers. We denote by $F_{[i,j]}$ the subset 
    $$F_{[i,j]} \coloneq (a_i, a_{i+1} \ldots, a_{j-1}, a_{j}).$$
    If $i > j,$ then we say that $F_{[i,j]} = \emptyset$.
\end{notation}

\begin{notation}[\cite{novikManyNeighborlySpheres2024}]
Let $I$ be an antichain of $\mathcal F ^{[1,n]}_{2t}$. Let $M \subseteq [n]$ be a subset of size $2m$. and let 
$$S(I) \coloneq \{N \subseteq [n] : \max(M) < \min(N) \text{ and } M \cup N \in \mathcal F(I)\}.$$
We define the antichain $I(M)$ of $\mathcal F ^{[1,n]}_{2(t-m)}$ as follows:
$$I(M) \coloneq \text{maximal elements of } S(I) \text{ with respect to } <_p.$$
\end{notation}

\begin{notation}[\cite{novikManyNeighborlySpheres2024}]
If $I$ is an antichain in $\mathcal F$ and $k \in \mathbb N$, we denote by $B(I, k)$ the simplicial complex generated by all the facets $F$ in $\mathcal F(I)$ such that $\min(F) \geq k$.
\end{notation}

As an example, let $I$ be an antichain and let $F\coloneq (a_1, \ldots, a_{2t})$ be a facet of $B(I)$. Consider the simplicial complex $B(I(F_{[1,2j]}), a_{2j}+1)$. According to the definitions above, the facets of this complex are of the form $G_{[2j+1, 2t]}$, where $G$ is a facet of $B(I)$ that has $F_{[1,2j]}$ as a prefix (in other words, $G_{[1,2j]} = F_{[1,2j]}$). In particular, we can see that
$$F \in B(I) \iff F_{[2j+1, 2t]} \in B(I(F_{[1,2j]}), \, a_{2j}+1).$$

As we will see in Proposition \ref{prop:iso-squeezed-balls}, simplicial complexes of this form are isomorphic to squeezed balls, and they will appear frequently throughout the remainder of the paper.

\begin{notation}
    Let $k \in \mathbb Z$. If $x \in \mathbb Z$, we define $\varphi_k(x) \coloneq x - k$. Similarly, if $F$ is a set with $\min(F) \geq k+1$, we denote by $\varphi_k(F)$ the set given by $\{\varphi_k(x) : x \in F\}$, and if $\mathcal F$ is a collection of sets, we denote by $\varphi_k(\mathcal F)$ the collection of sets given by $\{\varphi_k(F): F \in \mathcal F \text{ and } \min(F) \geq k+1\}$.
\end{notation}

\begin{proposition}\label{prop:iso-squeezed-balls}
    The complex $B(I, k+1)$ is isomorphic to a squeezed ball via the map $\varphi_k$.
\end{proposition}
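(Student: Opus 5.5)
The plan is to exhibit an explicit antichain $I'$ in $\mathcal F^{[1,n-k]}_{2t}$ and check that $\varphi_k$ maps the facet set of $B(I,k+1)$ bijectively onto $\mathcal F(I')$. Since $\varphi_k$ is injective on vertices, it then induces a simplicial isomorphism between $B(I,k+1)$ and the squeezed ball $B(I')$.

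First we check that $\varphi_k$ preserves the structure of $\mathcal F$. Take $F=(a_1,a_1+1,\dots,a_t,a_t+1)\in\mathcal F^{[1,n]}_{2t}$ with $\min F=a_1\ge k+1$. Then
$$\varphi_k(F)=(a_1-k,a_1-k+1,\dots,a_t-k,a_t-k+1).$$
The consecutive pairs and the gaps $a_i+1<a_{i+1}$ are unchanged, while $a_1-k\ge 1$ and $a_t-k+1\le n-k$. So $\varphi_k(F)\in\mathcal F^{[1,n-k]}_{2t}$. Conversely, every $G\in\mathcal F^{[1,n-k]}_{2t}$ equals $\varphi_k(F)$ for $F=G+k\in\mathcal F^{[k+1,n]}_{2t}$. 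Hence $\varphi_k$ is a bijection from $\mathcal F^{[k+1,n]}_{2t}$ onto $\mathcal F^{[1,n-k]}_{2t}$. Because it subtracts $k$ from every coordinate, it preserves and reflects $\le_p$, so it is a poset isomorphism.

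Next set $\mathcal G \coloneq \mathcal F(I)\cap \mathcal F^{[k+1,n]}_{2t}$; this is exactly the facet set of $B(I,k+1)$. The key observation is that $\mathcal G$ is a down-set of $\mathcal F^{[k+1,n]}_{2t}$. Indeed, if $G\le_p F\in\mathcal G$ and $G\in\mathcal F^{[k+1,n]}_{2t}$, then $G\in\mathcal F(I)$ because $\mathcal F(I)$ is a down-set in $\mathcal F$. Applying the poset isomorphism, $\varphi_k(\mathcal G)$ is a down-set of the finite poset $\mathcal F^{[1,n-k]}_{2t}$. It is therefore generated by its set of maximal elements $I'$, which is an antichain, so $\varphi_k(\mathcal G)=\mathcal F(I')$.

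Finally, $\varphi_k$ restricted to the vertex set $[k+1,n]$ is a bijection that maps facets to facets. It therefore carries the complex generated by $\mathcal G$ onto the complex generated by $\mathcal F(I')$, giving $B(I,k+1)\cong B(I')$.

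The main obstacle is a degenerate case: $\mathcal G$ may be empty, or $n-k\le 2t$, in which case the image is not a genuine squeezed ball. I would handle this by stating the isomorphism only when $B(I,k+1)$ is nonempty, which is the case in all later applications, or by interpreting $B(\emptyset)$ as the empty complex. Apart from this, every step is routine checking.
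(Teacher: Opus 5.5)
Your argument is correct, but it reaches the conclusion by a slightly different route than the paper.

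The paper names the target antichain explicitly as $\varphi_k(I)$, the shifts of those $y\in I$ with $\min y\ge k+1$. It then checks two inclusions directly: facets of $B(I,k+1)$ map into $B(\varphi_k(I))$, and facets of $B(\varphi_k(I))$ pull back into $B(I,k+1)$.

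You instead show three things: $\varphi_k$ is a poset isomorphism $\mathcal F^{[k+1,n]}_{2t}\to\mathcal F^{[1,n-k]}_{2t}$, the facet set $\mathcal G$ is a down-set, and so its image is generated by its maximal elements $I'$.

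The two antichains coincide. An element of $I$ with minimum at least $k+1$ is maximal in $\mathcal G$. Conversely, if $F\in\mathcal G$ is maximal and $F\le_p y\in I$, then $\min y\ge \min F\ge k+1$, so $y\in\mathcal G$ and hence $F=y$.

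The inequality $\min y\ge\min F$ is exactly what the paper uses without comment in its forward inclusion; it is needed for $\varphi_k(y)$ to be defined. Your down-set formulation never needs it. You also flag the degenerate case $\mathcal G=\emptyset$, which the paper passes over.

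In exchange, the paper's explicit antichain is directly usable in Corollary~\ref{prop:iso-relative-squeezed-balls}, where the shifted pair must again satisfy $\prec_p$. With explicit antichains this follows from the identity $\varphi_k(I)-1=\varphi_k(I-1)$.
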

\begin{proof}
First, we show that if $F \in B(I, k+1)$ then $\varphi_k(F) \in B(\varphi_k(I))$, which is a squeezed ball. Since $\min(F) \geq k+1$ it follows that $\varphi_k(F)$ is well-defined, with $\min(\varphi_k(F)) \geq 1$. Furthermore, the assumption that $F \in B(I)$ implies that $\varphi_k(F) \in B(\varphi_k(I))$.

Now we show that if $F' \in  B(\varphi_k(I))$, then $\varphi^{-1}_k(F') \in B(I, k+1)$. Since $\min(F') \geq 1$, it follows that $\min(\varphi^{-1}_k(F')) \geq k+1$. Furthermore, the assumption that $F' \in B(\varphi_k(I))$ implies that $\varphi^{-1}_k(F') \in B(I)$. We conclude that $\varphi^{-1}_k(F') \in B(I, k+1)$. 
\end{proof}
\begin{corollary} \label{prop:iso-relative-squeezed-balls}
    The complex $B(I, k+1) \setminus B(J, k+1)$ is isomorphic to a relative squeezed ball via the map $\varphi_k$.
\end{corollary}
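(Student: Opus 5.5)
The plan is to reduce the corollary to Proposition \ref{prop:iso-squeezed-balls}, applied to $I$ and to $J$ separately. The map $\varphi_k$ is injective on vertex sets, so it carries facet differences to facet differences. Concretely, by Proposition \ref{prop:iso-squeezed-balls}, $\varphi_k$ restricts to a bijection from the facets of $B(I,k+1)$ onto the facets of $B(\varphi_k(I))$. It likewise gives a bijection from the facets of $B(J,k+1)$ onto those of $B(\varphi_k(J))$.

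First I will fix the relevant antichains. Here $\varphi_k(I)$ means the collection of elements of $I$ with minimum at least $k+1$, shifted down by $k$. This is not quite right as an antichain: an element $x\in I$ with $\min(x)\le k$ may still dominate facets with minimum at least $k+1$. So I will instead define $I'$ as the set of maximal elements of $\varphi_k(\mathcal F(I))$, and define $J'$ analogously. I will read the notation $B(\varphi_k(I))$ in Proposition \ref{prop:iso-squeezed-balls} this way, which its proof effectively does. Then $B(I,k+1)\cong B(I')$ and $B(J,k+1)\cong B(J')$ via $\varphi_k$.

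Since $\varphi_k$ is a bijection on facets compatible with both complexes, a facet $F$ with $\min F\ge k+1$ lies in $B(I,k+1)$ but not in $B(J,k+1)$ if and only if $\varphi_k(F)$ lies in $B(I')$ but not in $B(J')$. Taking the complexes generated by these facets gives
\[
\varphi_k\bigl(B(I,k+1)\setminus B(J,k+1)\bigr)=B(I')\setminus B(J').
\]

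The main step is to verify the hypothesis $J'\prec_p I'$, that is, $\mathcal F(J')\subseteq \mathcal F(I'-1)$, so that the right-hand side is genuinely a relative squeezed ball. Let $G'\in\mathcal F(J')$ and set $G=\varphi_k^{-1}(G')$. Then $G\in B(J)\subseteq B(I-1)$, so $G+\mathbf 1\in\mathcal F(I)$, and $\min(G+\mathbf 1)\ge k+2$. Hence $G'+\mathbf 1=\varphi_k(G+\mathbf 1)\in\mathcal F(I')$. It remains to check that this puts $G'$ in $\mathcal F(I'-1)$. Since $G'+\mathbf 1\le_p y$ for some $y\in I'$, and $y$ has first coordinate at least $2$, we get $G'\le_p y-\mathbf 1$ with $y-\mathbf 1\in I'-1$.

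I expect the only obstacle to be this notational bookkeeping: interpreting $\varphi_k(I)$ as the antichain of maximal shifted elements, and handling the boundary cases where first coordinates equal $1$ in the definition of $I'-1$. The remaining steps are direct consequences of Proposition \ref{prop:iso-squeezed-balls}.
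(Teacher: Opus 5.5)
Your proof is correct and follows the route the paper intends: the paper states the corollary without proof, as an immediate consequence of Proposition~\ref{prop:iso-squeezed-balls} applied to $I$ and $J$ separately, and your check that $J'\prec_p I'$ supplies a detail the paper leaves implicit. One correction: your worry about $\varphi_k(I)$ is unfounded, because $G\le_p x$ forces $\min G\le \min x$, so elements of $I$ with minimum at most $k$ dominate no facet with minimum at least $k+1$; hence your $I'$ is literally $\varphi_k(I)$ and no reinterpretation is needed.
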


With these results and notations in hand, we are now ready to introduce the ordering that will serve as a shelling order for a relative squeezed ball.

\begin{definition}\label{def:B-ordering}
    Let $B \coloneq B(I) \setminus B(J)$ be a relative squeezed ball of dimension $d \coloneq 2t-1$. Let $<_{tot}$ be a total order that extends the partial order $<_p$, and let $F$ and $G$ be facets of $B$. We define $<_B$ recursively according to the following rules:

    \begin{enumerate}
        \item If $G \not \in B(I-1)$ and $F \in B(I-1)$, then $G <_B F$.
        \item If $F, G \in B(I-1)$, then $G <_B F$ if and only if $F <_{tot} G$.
        \item If $F, G \not \in B(I-1)$ and $\min(G) < \min(F)$, then $G <_B F$.
        \item If $F, G \not \in B(I-1)$ and $\min(G) = \min(F) = k$, then let 
        \begin{align*}
            F' &\coloneq \varphi_{k+1}(F \setminus \{k, k+1\}), \\
            G' &\coloneq \varphi_{k+1}(G \setminus \{k, k+1\}), \\
            B' &\coloneq \varphi_{k+1}(B(I(\{k, k+1\}), k+2)\setminus B(J(\{k, k+1\}), k+2)).
        \end{align*}
        Note that $F', G' \in B'$. Furthermore, by Corollary \ref{prop:iso-relative-squeezed-balls}, $B'$ is a relative squeezed ball of dimension $2(t-1)-1$. In this case, we define $<_B$ recursively by: 
        $$G <_B F \text{ if } G' <_{B'} F'.$$
    \end{enumerate}

\end{definition}

\begin{remark}
For $t = 1$, this definition reduces to the reverse order: $G <_B F$ if and only if $\min(G) > \min(F)$. In particular, when $t = 1$, the definition of $<_B$ is non-recursive, since rule~4 never applies. Thus, the case $t = 1$ serves as the implicit base case, ensuring that the definition is well-founded.
\end{remark}

In plain English, Definition \ref{def:B-ordering} above lists the facets of a relative squeezed ball in the following order: First list the facets that belong to $B(I) \setminus B(I-1)$. The way this is done is by first listing all the facets of $B(I)\setminus B(I-1)$ whose minimum element is 1, then listing all the facets of $B(I)\setminus B(I-1)$ whose minimum element is 2, and so on. The set of all facets of $B(I)\setminus B(I-1)$ whose minimum element is $i$ is of the form $\{i, i+1\} * B'$, where $B'$ is isomorphic to a lower-dimensional relative squeezed ball by Corollary \ref{prop:iso-relative-squeezed-balls}. Therefore, the facets in each of these sets can be listed recursively. After listing all the facets of  $B(I)\setminus B(I-1)$ in this manner, list the remaining facets of $B$ from \textbf{larger to smaller} following any fixed total ordering $<_{tot}$ that extends the partial order $<_p$. 

Figure \ref{fig:relative_squeezed_ball_diagram_top_layer} below demonstrates this ordering for a $3$-dimensional relative squeezed ball, where we let $<_{tot}$ be the lexicographic order. 

\begin{figure}[h!]
    \centering
    \includegraphics[width=0.35\linewidth]{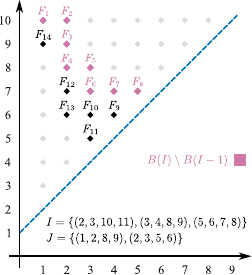}
    \caption{Ordering of $B(I)\setminus B(J)$ with $B(I)\setminus B(I-1)$ highlighted.}
    \label{fig:relative_squeezed_ball_diagram_top_layer}
\end{figure}

To show that this ordering is a shelling order, we will use the Restriction Face Criterion described in Lemma \ref{lemma:restriction-face-criterion}. In fact, we will see that under this ordering, every facet $F$ has a restriction face: the face $X_B(F)$ defined below.

\begin{definition}\label{def:X_B(F)}
    Let $B \coloneq B(I) \setminus B(J)$ be a relative squeezed ball of dimension $d \coloneq 2t-1$. Let $F \coloneq (a_1, \ldots, a_{2t})$ be a facet of $B$. We define the set $X_B(F) \subset F$ as follows:
    \begin{enumerate}
        \item If $F \in B(I)\setminus B(I-1)$ and $\min(F) = 1$, let 
        $$X_B(F) = \varphi^{-1}_2(X_{B'}(F')),$$ 
        where $F' = \varphi_2(F \setminus \{1,2\})$ and $B' = \varphi_{2}(B(I(\{1,2\}, 3) \setminus B(J(\{1,2\}, 3))$. As our base case in this recursion, for $t=1$, we let $X_B(F) = \emptyset$.
        \item Otherwise, let 
        $$X_B(F) = \{a_{2i} : 1 \leq i < m\} \cup \{a_{2i-1} : m < i \leq t\},$$ 
        where $m = \min\{i : (a_1, \ldots, a_{2i}) \cup (a_{2i+1}+1, \ldots, a_{2t}+1) \in B(I)\}$. 
    \end{enumerate}
\end{definition}

Intuitively, $X_B(F)$ is recursively defined in the first case because, according to Definition \ref{def:B-ordering}, the set 
$$\{F : F \in B(I)\setminus B(I-1) \text{ and } \min(F) = 1\}$$
is an initial segment with respect to the ordering $<_B$. Furthermore, this is a set of the form $\{1,2\} * B'$, where $B'$ is isomorphic to a lower-dimensional relative squeezed ball. Therefore, minimal new faces in this initial segment should be in correspondence with the minimal new faces in the ordering of the lower-dimensional ball $B'$. 

An alternative way to characterize the $m$ from the second case is through the following lemma, which we will use in the proof of Proposition \ref{prop:restriction-face-condition-1}. The proof of this lemma can be found in the Appendix.

\begin{numlemma}{\ref{lemma:about-F-and-m}}
    Let $m = \min\{i : (a_1, \ldots, a_{2i})\cup (a_{2i+1}+1, \ldots, a_{2t}+1) \in B(I)\}$. Then, for $0\leq j < m$,
    $$F_{[2j+1, 2t]} \in B(I(F_{[1,2j]}), a_{2j}+1)\setminus B(I(F_{[1,2j]}) - 1, a_{2j}+1).$$
    Furthermore, for $j=m$,
    $$F_{[2j+1, 2t]} \in B(I(F_{[1,2j]}) - 1, a_{2j}+1).$$
\end{numlemma}

Let $F$ be a facet of a relative squeezed ball $B$. To show that $X_B(F)$ is the restriction face of $F$, we need to prove that $X_B(F)$ satisfies the two conditions outlined in Definition \ref{def:restriction-face}. We devote the rest of this section to proving these two conditions. 

\subsection{Verifying the restriction face criterion: Part 1}

To verify Condition 1 we need to show that for every $x \in X_B(F)$, there exists a facet $G$ of $B$ such that $G<_BF$ and $F\setminus\{x\} \subseteq G$. In Definition \ref{def:G_B(F,x)} below, we define a set $G_B(F,x)$ that will serve this purpose. Specifically, in Proposition \ref{prop:restriction-face-condition-1}, we prove that $G_B(F,x)$ is a facet of $B$ such that $G_B(F,x) <_B F$ and $F\setminus\{x\} \subseteq G$. Along the way, we use Lemmas \ref{lemma:G_case_2a} and \ref{lemma:G_case_2b} as technical tools.

\begin{definition} \label{def:G_B(F,x)}
    Let $B \coloneq B(I) \setminus B(J)$ be a relative squeezed ball of dimension $d \coloneq 2t-1$. Let $F \coloneq (a_1, \ldots, a_{2t})$ be a facet of $B$, and let $x \in X_B(F)$. We define $G_B(F,x)$ as follows:
    \begin{enumerate}
        \item If $F \in B(I)\setminus B(I-1)$ and $\min(F) = 1$, then $$G_B(F,x) = \{1,2\}\cup \varphi^{-1}_2(G_B'(F',x')),$$ where $B' = \varphi_{2}(B(I(\{1,2\}), 3) \setminus B(J(\{1,2\}), 3))$, $F' = \varphi_2(F \setminus \{1,2\})$, and $x' = \varphi_2(x) \in X_{B'}(F')$.
        \item Otherwise, we consider the following two cases:
        \begin{enumerate}
            \item If $x \in \{a_{2i} : 1 \leq i < m\}$, then $G_B(F,x) = L(F,x)$.
            \item If $x \in \{a_{2i-1} : m < i \leq t\}$, then $G_B(F,x) = R(F, x)$.
        \end{enumerate}
    \end{enumerate}
\end{definition}

The following two lemmas describe some properties of $G_B(F,x)$ when $F$ and $x$ fall under case 2a and 2b of Definition \ref{def:G_B(F,x)}. To avoid losing sight of the main goal of this section, the proofs of these lemmas can be found in the Appendix. 

\begin{numlemma}{\ref{lemma:G_case_2a}}
    Let $G \coloneq G_B(F, x)$ be defined according to case 2a of Definition \ref{def:G_B(F,x)}, and let $r = \max\{j : F_{[1,2j]}=G_{[1,2j]}\}$. Then, for $0 \leq j \leq r$,
    $$G_{[2j+1, 2t]} \in B(I(F_{[1,2j]}), a_{2j}+1)\setminus B(I(F_{[1,2j]}) - 1, a_{2j}+1).$$
    
\end{numlemma}

\begin{numlemma}{\ref{lemma:G_case_2b}}
    Let $G \coloneq G_B(F, x)$ be defined according to case 2b of Definition \ref{def:G_B(F,x)}, and let $r = \max\{j : F_{[1,2j]}=G_{[1,2j]}\}$. Recall that $m = \min\{i : (a_1, \ldots, a_{2i})\cup (a_{2i+1}+1, \ldots, a_{2t}) \in B(I)\}$. Then, for $0 \leq j \leq r$,
    $$G_{[2j+1, 2t]} \in B(I(F_{[1,2j]}), a_{2j}+1)\setminus B(J(F_{[1,2j]}), a_{2j}+1).$$
    Furthermore, for $0 \leq j < m$,
    $$G_{[2j+1, 2t]} \in B(I(F_{[1,2j]}), a_{2j}+1)\setminus B(I(F_{[1,2j]})-1, a_{2j}+1).$$
\end{numlemma}

Now we are ready to prove the main result of this section. The structure of the proof is as follows: 

If $F$ falls under case 1 of Definition \ref{def:G_B(F,x)}, then the proof is reduced to a lower-dimensional case where the statement holds by induction.

If $F$ and $x$ fall under cases 2a or 2b of Definition \ref{def:G_B(F,x)}, then the proof consists of two steps. Step 1 is showing that there is a scenario where the result follows directly without the need of recursion, and Step 2 is showing that otherwise the problem can be recursively reduced (using Lemmas \ref{lemma:G_case_2a} and \ref{lemma:G_case_2b}) to the scenario from Step 1.

\begin{proposition} \label{prop:restriction-face-condition-1}
    Let $G \coloneq G_B(F, x)$. Then $G$ is a facet of $B$, $F\setminus \{x\} \subseteq G$, and $G <_B F$.
\end{proposition}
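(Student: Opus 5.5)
We proceed by induction on $t$, following the case split of Definition \ref{def:G_B(F,x)}. The containment $F\setminus\{x\}\subseteq G$ is immediate in every case: in cases 2a and 2b, $L(F,x)$ and $R(F,x)$ are obtained from $F$ by removing $x$ and adding one new vertex. In case 1 it follows from the inductive containment $F'\setminus\{x'\}\subseteq G_{B'}(F',x')$ after applying $\varphi_2^{-1}$ and adjoining $\{1,2\}$. So the real content is showing that $G$ is a facet of $B$ and that $G<_B F$.

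\textbf{Case 1.} Here $F\in B(I)\setminus B(I-1)$ and $\min(F)=1$, and we reduce to $B'$, which by Corollary \ref{prop:iso-relative-squeezed-balls} is a relative squeezed ball of dimension $2(t-1)-1$. By induction, $G':=G_{B'}(F',x')$ is a facet of $B'$ with $G'<_{B'}F'$. Since $\{1,2\}\cup\varphi_2^{-1}(G')$ has prefix $\{1,2\}$, membership of $G'$ in $B'$ translates into $G\in B(I)\setminus B(J)$. It remains to show $G\notin B(I-1)$. If $G$ were in $B(I-1)$, then its translate would lie in $B(I)$ with minimum $0$, which is impossible. More carefully, I will argue this via the correspondence $F\in B(I)\iff F_{[3,2t]}\in B(I(F_{[1,2]}),3)$ applied to $I-1$, using that $I(\{1,2\})-1$ governs membership in $B(I-1)$. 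Once $\min(G)=\min(F)=1$ and both lie outside $B(I-1)$, rule 4 of Definition \ref{def:B-ordering} gives $G<_BF$ exactly when $G'<_{B'}F'$, which holds by induction.

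\textbf{Cases 2a and 2b, Step 1 (non-recursive scenario).} First I check that $G$ is a valid element of $\mathcal F$. In case 2a, $x=a_{2i}$ with $i<m$, so $L(F,x)$ shifts a block leftwards. In case 2b, $x=a_{2i-1}$ with $i>m$, so $R(F,x)$ shifts a block rightwards. In both cases $G$ has the even-block form because $x$ is an endpoint of a block. Next I show $G\in B(I)\setminus B(J)$:
\begin{itemize}
\item In case 2a, $G<_pF$, so $G\in B(I)$. For $G\notin B(J)$, Lemma \ref{lemma:G_case_2a} with $j=0$ gives the stronger statement $G\notin B(I-1)\supseteq B(J)$.
\item In case 2b, Lemma \ref{lemma:G_case_2b} with $j=0$ gives $G\in B(I)\setminus B(J)$ directly.
\end{itemize}
For the ordering, consider the scenario $r=0$, where $F$ and $G$ first differ in the first block, so $\min G\neq\min F$ or membership in $B(I-1)$ differs. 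In case 2a with $i=1$, we get $\min G=\min F-1$, and both lie outside $B(I-1)$ (by Lemma \ref{lemma:about-F-and-m} and Lemma \ref{lemma:G_case_2a}), so rule 3 applies. If instead $F\in B(I-1)$, which forces $m=0$ and hence only case 2b occurs, then $G=R(F,x)>_pF$. Either $G\notin B(I-1)$, and rule 1 applies, or $G\in B(I-1)$ and $F<_{tot}G$, and rule 2 applies. In case 2b with $r=0$ and $m\ge1$, both facets lie outside $B(I-1)$, and the leftward or rightward shift changes the minimum in the direction that makes rule 3 give $G<_BF$.

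\textbf{Step 2 (recursion).} When $r\ge1$, $F$ and $G$ share the prefix $F_{[1,2r]}$. Lemmas \ref{lemma:about-F-and-m}, \ref{lemma:G_case_2a} and \ref{lemma:G_case_2b} show that for all $j<\min(r,m)$, both tails lie in $B(I(F_{[1,2j]}))\setminus B(I(F_{[1,2j]})-1)$. Hence rule 4 applies repeatedly, and comparing $F$ and $G$ reduces to comparing $F_{[2r+1,2t]}$ and $G_{[2r+1,2t]}$ in the relative squeezed ball with respect to $I(F_{[1,2r]})$ and $J(F_{[1,2r]})$, translated by $\varphi_{a_{2r}}$. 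There the first-block situation of Step 1 applies, with $m$ replaced by $m-r$ and $x$ still of the same type.

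The main obstacle is this bookkeeping. One must verify that the index $m$ and the cases for $x$ are inherited correctly by the truncated ball; this is exactly what the "furthermore" parts of the lemmas provide. One must also handle the boundary case $r=m$, where the tails enter $B(I(F_{[1,2m]})-1)$ and rule 2 with the reversed $<_{tot}$ must be used instead of rule 3.
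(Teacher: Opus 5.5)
Your overall plan is the paper's: induction on $t$, the case split of Definition~\ref{def:G_B(F,x)}, Lemmas~\ref{lemma:about-F-and-m}, \ref{lemma:G_case_2a}, \ref{lemma:G_case_2b}, and repeated use of Rule~4 (Condition~4 of Definition~\ref{def:B-ordering}). However, two steps do not work as written.

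\textbf{Case~1.} You are right that Rule~4 requires $G\notin B(I-1)$; the paper's proof invokes Condition~4 here without checking it. But your justification is false. One has $G\in B(I-1)$ if and only if $G+\mathbf{1}\in B(I)$, and $G+\mathbf{1}$ has minimum $2$, so nothing impossible occurs. Nor does $I(\{1,2\})-1$ govern membership: $G+\mathbf{1}$ has prefix $\{2,3\}$, so it is $I(\{2,3\})$ that decides. Since $(1,2)\cup(G_{[3,2t]}+1)\le_p G+\mathbf{1}$, you get only the one-way implication $G\in B(I-1)\Rightarrow G'\in B(I'-1)$, where $I'=\varphi_2(I(\{1,2\}))$. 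This does not suffice, because $G'\in B(I'-1)$ can occur. The fix is to carry through the induction the auxiliary claim that $F\notin B(I-1)$ implies $G_B(F,x)\notin B(I-1)$:
\begin{itemize}
\item In case~2a this is Lemma~\ref{lemma:G_case_2a} with $j=0$.
\item In case~2b, $G=R(F,x)\ge_p F$, and $B(I-1)$ is a down-set.
\item In Case~1, suppose first $F'\in B(I'-1)$. Then $F'$ falls under case~2 of $B'$ with $m'=0$, so $G'$ is an $R$-move and $G\ge_p F$. Otherwise $F'\notin B(I'-1)$; induction gives $G'\notin B(I'-1)$, and the implication above then gives $G\notin B(I-1)$.
\end{itemize}

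\textbf{Case~2b.} Your Step~2 descends to the wrong level. Here $x=a_{2i-1}$ with $i>m$, and $R(F,x)$ leaves the first $i-1$ pairs untouched, so $r=i-1\ge m$. In particular your sub-scenario ``case~2b with $r=0$ and $m\ge 1$'' is empty. The reason you give for it is also backwards: an $R$-move never lowers the minimum, so Rule~3 could only ever give $F<_B G$.

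By Lemma~\ref{lemma:about-F-and-m}, $F^m\in B(I^m-1)$, so Rule~4 is available only at levels $j<m$. The descent must therefore stop at level $m$, not at level $r$. There you conclude as in your $m=0$ scenario, using $F^m<_p G^m$:
\begin{itemize}
\item if $G^m\notin B(I^m-1)$, use Rule~1;
\item if $G^m\in B(I^m-1)$, use Rule~2.
\end{itemize}
Reducing to $F_{[2r+1,2t]}$ ``with $m$ replaced by $m-r$'' is meaningless when $r>m$. The boundary case $r=m$ that you single out is only the special case $i=m+1$.

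\textbf{Case~2a.} Your descent to level $r$ is correct here, since $r\le i-1<m$. Note, though, that $r=0$ happens whenever the run ending at $a_{2i}$ starts at $a_1$, not only when $i=1$. The clean split is $\min G<\min F$ versus $\min G=\min F$, as in the paper.
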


\begin{proof}
    According to Definition \ref{def:G_B(F,x)}, we must consider the following cases:

    \begin{enumerate}
        \item \textbf{Suppose that $\bm{F \in B(I) \setminus B(I-1)}$ and $\bm{\min(F) = 1}$.}
        
        In this case 
        $$G_B(F,x) = \{1,2\}\cup \varphi^{-1}_2(G_{B'}(F',x')).$$ 

        Since B' is a relative squeezed ball of dimension $2(t-1)-1$, by the induction hypothesis, $G_{B'}(F',x')$ is a facet of $B'$ that contains $F'\setminus\{x'\}$ and comes before $F'$.
        
        Since $G_{B'}(F', x')$ is a facet of $B'$, the set $G_B(F,x)$ must be a facet of $B$. Furthermore, since $G_{B'}(F',x')$ contains $F' \setminus \{x'\}$, it follows that
        $$G_B(F,x) = \{1,2\} \cup \varphi^{-1}_2(G_{B'}(F',x'))  \supseteq \{1,2\} \cup \varphi^{-1}_2(F'\setminus \{x'\}) = \{1,2\} \cup ((F\setminus\{1,2\})\setminus\{x\}) = F \setminus \{x\}.$$
        Finally, since $G_{B'}(F', x') <_{B'} F'$ and $\min(F) = \min(G) = 1$, we can conclude that $G_B(F, x) <_B F$ by Condition 4 of Definition \ref{def:B-ordering}.
        \item \textbf{Otherwise, we consider the following two cases:}
        \begin{enumerate}
            \item \textbf{Suppose that $\bm{x \in \{a_{2i} : 1 \leq i < m\}}$.}

            In this case, $G$ contains $F\setminus\{x\}$ because $G = L(F,x)$. Furthermore, by taking $j=0$ in Lemma \ref{lemma:G_case_2a} we see that $G \in B(I)\setminus B(I-1)$, and so it is a facet of $B$. It remains to prove that $G <_B F$. For this, we consider the following two scenarios:

            \begin{enumerate}
                \item Suppose that $\min(G) < \min(F)$. By Lemma \ref{lemma:about-F-and-m}, $F \in B(I) \setminus B(I-1)$. Since $G$ is also in $B(I) \setminus B(I-1)$, Condition 3 of Definition \ref{def:B-ordering} guarantees that $G <_B F$.

                \item Otherwise, it must be the case that $\min(G) = \min(F) = k$. We will show that Condition 4 of Definition \ref{def:B-ordering} can be applied iteratively to reduce this scenario to scenario 1. For this, let
                \begin{align*}
                    F^j &\coloneq \varphi_{a_{2j}}(F_{[2j+1,2t]}), \\
                    G^j &\coloneq \varphi_{a_{2j}}(G_{[2j+1,2t]}), \\
                    B^j \coloneq B(I^j)\setminus B(J^j) &\coloneq \varphi_{a_{2j}}(B(I(F_{[1,2j]}),\, a_{2j}+1)\setminus B(J(F_{[1,2j]}),\, a_{2j}+1)).
                \end{align*}

                Let $r = \max\{j : F_{[1:2j]} = G_{[1,2j]}\}$. Lemmas \ref{lemma:about-F-and-m} and \ref{lemma:G_case_2a} imply that for $0 \leq j \leq r$, both $F^j$ and $G^j$ belong to $B(I^j)\setminus B(I^j-1)$. Therefore, repeatedly applying Condition 4 of Definition \ref{def:B-ordering} yields
                \begin{align*}
                    G <_B F &\iff G^1 <_{B^1} F^1 \iff G^2 <_{B^2} F^2 \iff \cdots \iff G^r <_{B^r} F^r.
                \end{align*}
                Since $r = \max\{j : F_{[1:2j]} = G_{[1,2j]}\}$ and since $G = L(F, x)$, we must have $\min(G^r) < \min(F^r)$. Therefore, this case is reduced to scenario i.
            \end{enumerate}
            From this, we conclude that $G <_B F,$ as desired.

            \item \textbf{Suppose that $\bm{x \in \{a_{2i-1} : m < i \leq t\}}$.}
            
            In this case $G$ contains $F\setminus\{x\}$ because $G = R(F,x)$. Furthermore, by taking $j=0$ in Lemma \ref{lemma:G_case_2b} we see that $G \in B(I)\setminus B(J)$, and so it is a facet of $B$. It remains to prove that $G <_B F$. For this, we consider the following two scenarios:
            \begin{enumerate}
                \item Suppose that $m=0$. In this case, $F \in B(I-1)$. If $G \notin B(I-1)$, then $G <_B F$ by Condition 1 of Definition \ref{def:B-ordering}. If $G \in B(I-1)$, then, since $F <_p G$, it follows from Condition 2 of Definition \ref{def:B-ordering} that $G <_B F$.
                \item Suppose that $m>0$. Let
                \begin{align*}
                    F^j &\coloneq \varphi_{a_{2j}}(F_{[2j+1,2t]}), \\
                    G^j &\coloneq \varphi_{a_{2j}}(G_{[2j+1,2t]}), \\
                    B^j \coloneq B(I^j)\setminus B(J^j) &\coloneq \varphi_{a_{2j}}(B(I(F_{[1,2j]}),\, a_{2j}+1)\setminus B(J(F_{[1,2j]}),\, a_{2j}+1)).
                \end{align*}
                Lemmas \ref{lemma:about-F-and-m} and \ref{lemma:G_case_2b} imply that for $0\leq j < m$, both $F^j$ and $G^j$ belong to $B(I^j) \setminus B(I^j-1)$. Furthermore, since $F_{[1,2j]} = G_{[1,2j]}$, repeatedly applying Condition 4 of Definition \ref{def:B-ordering} yields
                \begin{align*}
                    G <_B F &\iff G^1 <_{B^1} F^1 \iff G^2 <_{B^2} F^2 \iff \cdots \iff G^m <_{B^m} F^m.
                \end{align*}
                Lemma \ref{lemma:about-F-and-m} guarantees that $F^m \in B(I^m -1)$, hence the same argument as in scenario i applies. Specifically, if $G^m \notin B(I^m-1)$, then $G^m <_{B^m} F^m$ by Condition 1 of Definition \ref{def:B-ordering}. On the other hand, if $G^m \in B(I^m-1)$, then, since $F^m <_p G^m$, it follows from Condition 2 of Definition \ref{def:B-ordering} that $G^m <_B F^m$.
            \end{enumerate}
            Thus, we conclude that $G <_B F,$ as desired.
        \end{enumerate}
    \end{enumerate}
\end{proof}

\subsection{Verifying the restriction face criterion: Part 2}

With Proposition \ref{prop:restriction-face-condition-1}, we have proven that $X_B(F)$ indeed satisfies the first condition for being the restriction face of $F$. In Proposition \ref{prop:restriction-face-condition-2} below, we show that $X_B(F)$ also satisfies the second condition for being the restriction face of $F$. But first, we state a technical lemma which will be proved in the Appendix:

\begin{numlemma}{\ref{lemma:condition-2}}
    Let $F$ be a facet of $B$ such that $F \in B(I) \setminus B(I-1)$ and $\min(F) >1$, and let $G$ be a facet of $B$ containing $X_B(F)$. If $\min(G) < \min(F)$, then $G \in B(I-1)$.
\end{numlemma}

\begin{proposition} \label{prop:restriction-face-condition-2}
    Let $F$ be a facet of $B \coloneq B(I) \setminus B(J)$. Then $X_B(F)$ is not contained in any facet $G$ of $B$ such that $G <_B F$.
\end{proposition}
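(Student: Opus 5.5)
We argue by induction on $t$, following the same case split as Definitions \ref{def:X_B(F)} and \ref{def:G_B(F,x)}. For $t=1$ the restriction face of every facet other than the first is empty; the statement is then vacuous in the sense required for $j>1$. For the first facet nothing needs to be checked. Assume then $t\ge 2$, let $G$ be a facet of $B$ with $G<_B F$, and suppose toward a contradiction that $X_B(F)\subseteq G$.

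\textbf{Case 1: $F\in B(I)\setminus B(I-1)$ and $\min(F)=1$.} The facets with minimum $1$ in $B(I)\setminus B(I-1)$ form an initial segment of $<_B$ (Conditions 1 and 3 of Definition \ref{def:B-ordering}). Hence $G$ also lies in this segment, so $G\supseteq\{1,2\}$ and $G\in B(I)\setminus B(I-1)$. Condition 4 then gives $G'<_{B'}F'$, and $X_B(F)\subseteq G$ translates to $X_{B'}(F')\subseteq G'$. This contradicts the induction hypothesis applied to the relative squeezed ball $B'$ of Corollary \ref{prop:iso-relative-squeezed-balls}.

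\textbf{Case 2: $X_B(F)=\{a_{2i}:i<m\}\cup\{a_{2i-1}:m<i\le t\}$.} We split according to where $G$ sits in the order.

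(a) Suppose $F\in B(I)\setminus B(I-1)$, i.e.\ $m\ge 1$, and $\min(G)<\min(F)$. Then $\min(F)>1$, since $\min(F)=1$ is Case 1. Lemma \ref{lemma:condition-2} gives $G\in B(I-1)$. But then $G$ comes after $F$ by Condition 1, which is a contradiction.

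(b) Suppose $\min(G)=\min(F)=k$ and both facets lie outside $B(I-1)$. Iterate Condition 4 as in the proof of Proposition \ref{prop:restriction-face-condition-1}, peeling off the common prefix pairs $F_{[1,2j]}=G_{[1,2j]}$. Since $a_2=k+1\in X_B(F)$ whenever $m>1$, the first pair of $G$ is forced to be $\{k,k+1\}$ at each level. Lemma \ref{lemma:about-F-and-m} keeps $F^j$ in the "top layer" for $j<m$, so we reach level $j$ where either the minima of $G^j$ and $F^j$ differ or $j=m$. In the former situation we are back in (a) at level $j$, applying Lemma \ref{lemma:condition-2} to $B^j$ (whose $X$ is the shifted tail of $X_B(F)$). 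In the latter we are in situation (c) at level $m$.

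(c) Suppose $F\in B(I-1)$, i.e.\ $m=0$, so $X_B(F)=\{a_1,a_3,\dots,a_{2t-1}\}$. Then $G\in B(I-1)$ and $F<_{tot}G$, or $G$ lies in the top layer. We show that any facet $G\in\mathcal F$ containing all odd-indexed entries of $F$ satisfies $G\le_p F$ coordinatewise. Indeed, $G$ is a union of $t$ consecutive pairs, and the $a_{2i-1}$ are pairwise non-adjacent. So each pair of $G$ contains exactly one $a_{2i-1}$, being either $\{a_{2i-1}-1,a_{2i-1}\}$ or $\{a_{2i-1},a_{2i-1}+1\}$. Hence $b_i\le a_{2i-1}$, i.e.\ $G\le_p F$.

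It follows that $G\ne F$ forces $G<_pF$, so $G<_{tot}F$ and $F<_BG$, contradicting the first option. For the second option, $G\le_p F\in B(I-1)$ puts $G\in B(I-1)$, so $G$ cannot lie in the top layer. For the reduced levels, the same argument is applied to $F^m,G^m$ and $B^m$.

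The main obstacle is the bookkeeping in (b): checking that containment of $X_B(F)$ is inherited by the truncated facets $G^j\supseteq X_{B^j}(F^j)$, and that the prefix-forcing argument works at each level. I would first verify the prefix forcing by reading off $X_B(F)$ the consecutive elements $a_{2}, a_4,\dots$ that pin down the pairs of $G$.
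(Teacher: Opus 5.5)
Your proof is correct in substance. It differs from the paper in one place, and it needs two local repairs.

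\textbf{Comparison with the paper.} Your Case~1, (a) and (c) are the paper's Case~1, the Lemma~\ref{lemma:condition-2} half of its Case~3, and its Case~2. Your pigeonhole argument in (c) supplies the justification of $G\le_p F$ that the paper only asserts. The real difference is (b).

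The paper peels off $\{k,k+1\}$ once via Condition~4 and then invokes the induction hypothesis for $B'$. That step needs $X_{B'}(F')\subseteq\varphi_{k+1}(X_B(F)\setminus\{k,k+1\})$. The paper states this as an equality, but the equality can fail when $F'$ falls under case~1 of Definition~\ref{def:X_B(F)}, since that case discards elements; only the containment is needed.

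You instead unwind Condition~4, as in the proof of Proposition~\ref{prop:restriction-face-condition-1}, down to the first level $j$ where either the minima separate or $j=m$. At that level $F^j$ always falls under case~2 of the definition, because either $\min(F^j)>1$ or $F^j\in B(I^j-1)$, and its parameter is $m-j$. So $X_{B^j}(F^j)$ is exactly the shifted tail of $X_B(F)$, and it lies in $G^j$ because $G_{[1,2j]}=F_{[1,2j]}$. Lemma~\ref{lemma:condition-2} for $B^j$, or the argument of (c), then finishes. Your route costs more bookkeeping, but it makes (b) independent of the induction hypothesis, which you then use only in Case~1.

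\textbf{First repair: the base case.} As stated it is wrong. For $t=1$, a non-first facet $F=(a,a+1)$ lies in $B(I-1)$, so $m=0$ and $X_B(F)=\{a\}$, not $\emptyset$. Moreover, an empty restriction face on a non-first facet would make the proposition false rather than vacuous, since $\emptyset$ is contained in every earlier facet. The fix is immediate:
\begin{itemize}
\item your argument (c) never uses $t\ge 2$;
\item for $t=1$ the top layer $B(I)\setminus B(I-1)$ consists of the single first facet, so Case~1, (a) and (b) are empty.
\end{itemize}

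\textbf{Second repair: equal minima in (b).} At each level $j<m$, the equality of minima is not forced by reading even-indexed entries of $X_B(F)$. Knowing $a_{2j+2}\in G$ is compatible with the next pair of $G$ being $\{a_{2j+2},a_{2j+2}+1\}$. For $j=m-1$ there is no such entry in $X_B(F)$ at all.

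What forces equal minima is the order itself. Since $G^j<_{B^j}F^j$ and $F^j$ is in the top layer, Conditions~1 and~3 put $G^j$ in the top layer with $\min(G^j)\le\min(F^j)$. Then (a), applied at level $j$, rules out strict inequality. With this, your iteration is complete; when $m=t$, reaching level $m$ simply means $G=F$, contradicting $G<_B F$.
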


\begin{proof}
    We consider the following cases according to Definition \ref{def:X_B(F)}:
    \begin{enumerate}
        \item \textbf{Suppose $\bm{F \in B(I)\setminus B(I-1)}$ and $\bm{\min(F)=1}$.}
        Let $G$ be a facet of $B$ such that $G <_B F$. It is enough to show that $G$ does not contain $X_B(F)$. 

        Observe that $G <_B F$ can occur only via Condition 4 of Definition \ref{def:B-ordering}. Therefore, $G$ must satisfy the following conditions:
        $$G \in B(I) \setminus B(I-1),\  \min(G) = \min(F) = 1, \text{ and }G' <_{B'} F',$$
        where $F' = \varphi_2(F\setminus\{1,2\})$, $G' = \varphi_2(G\setminus\{1,2\})$, and $B' = \varphi_2(B(I(\{1,2\}), 3)\setminus B(J(\{1,2\}), 3))$.

        Since $G' <_{B'} F'$, the induction hypothesis gives $X_{B'} \notin G'$. Therefore, we can see that

        $$X_B(F) = \varphi^{-1}_2(X_{B'}(F')) \not\subseteq \varphi^{-1}_2(G') = G\setminus \{1,2\}.$$

        Finally, since $X_B(F) \cap \{1,2\} = \emptyset$, we conclude that
        $$X_B(F) \not\subseteq G,$$
        as desired.
        
        \item \textbf{Suppose $\bm{F \in B(I-1)}$}. Let $G$ be a facet of $B$ containing $X_B(F)$. It is enough to show that $F <_B G$.

        Note that in this case $m=0$, and so $X_B(F) = (a_1, a_3, \cdots, a_{2t-1})$. By our assumption, $G \supset (a_1, a_3, \cdots, a_{2t-1})$, hence it follows that $G <_p F$. This means that $G$ is also in $B(I-1)$. Since $F$ and $G$ are both in $B(I-1)$ and $G <_p F$, Condition 2 of Definition \ref{def:B-ordering} implies that $F<_B G$.
        
        \item \textbf{Suppose $\bm{F \in B(I)\setminus B(I-1)}$ and $\bm{\min(F) > 1}$.} Let $G$ be a facet of $B$ containing $X_B(F)$. It is enough to show that $F <_B G$.

        If $G \in B(I-1)$, then $F <_B G$ by Condition 1 of Definition \ref{def:B-ordering}. On the other hand, if $G \notin B(I-1)$, then by Lemma \ref{lemma:condition-2} we only need to consider the following cases:

        \begin{enumerate}
            \item Suppose that $\min(F) < \min(G)$. Then $F <_B G$ by Condition 3 of Definition \ref{def:B-ordering}.
            
            \item Suppose that $\min(G) = \min(F) = k$. By Condition 4 of Definition \ref{def:B-ordering}, it is enough to show that $F' <_{B'} G'$, where
            \begin{align*}
                F' &\coloneq \varphi_{k+1}(F \setminus \{k,k+1\}) \\
                G' &\coloneq \varphi_{k+1}(G \setminus \{k,k+1\}) \\
                B' &\coloneq \varphi_{k+1}(B(I(\{k,k+1\}), k+2)\setminus B((I-1)(\{k,k+1\}), k+2)).
            \end{align*}
            By the inductive hypothesis, if $X_{B'}(F') \subseteq G'$, then $F' <_{B'} G'$.  Therefore, it is enough to show that $X_{B'}(F') \subseteq G'$. 

            It is easy to check that 
            $$X_{B'}(F') = \varphi_{k+1}(X_B(F)\setminus \{k,k+1\}).$$
            Therefore, 
            \begin{align*}
                X_B(F) \subseteq G &\implies X_B(F) \setminus \{k,k+1\} \subseteq G\setminus\{k,k+1\} \\
                &\implies \varphi_{k+1}(X_B(F) \setminus \{k,k+1\}) \subseteq \varphi_{k+1}(G\setminus\{k,k+1\}) \\
                &\implies X_{B'}(F') \subseteq G',
            \end{align*}
            as desired.
        \end{enumerate}
    \end{enumerate}
\end{proof}

We conclude the paper with the following remark.
\begin{remark}
The main result of this paper shows that relative squeezed balls are shellable. Since any simplicial complex of the form $B\coloneq B(I) \setminus B(J)$ is a proper full-dimensional subcomplex of 
the boundary complex of the cyclic polytope, $B$ is a pseudomanifold with boundary. Shellability of $B$ then implies that it is a ball. This provides an alternative proof of the result from \cite[Lemma 3.11]{novikManyNeighborlySpheres2024} that relative squeezed balls are indeed balls.

Furthermore, the description of $X_B(F)$ guarantees that all restriction faces of $B$ have size less than or equal to $t$. Consequently, 
$$h_i\bigl(B(I)\setminus B(J)\bigr)=0 \text{ for all } i>t,$$ and so $B(I) \setminus B(J)$ is $t$-stacked. In fact, in the case $J = I-1$, the value of $m$ from Definition~4.9 cannot be zero. Hence, in this situation all restriction faces have size strictly smaller than $t$. Thus,
\[
  h_i\bigl(B(I)\setminus B(I-1)\bigr)=0 \quad \text{for all } i\ge t,
\]
and so $B(I)\setminus B(I-1)$ is $(t-1)$-stacked. This yields an alternative proof of the second part of \cite[Lemma 3.11]{novikManyNeighborlySpheres2024}.
\end{remark}

\section*{Acknowledgments}
The author would like to express her sincere gratitude to her academic advisor, Isabella Novik, for introducing this problem and for her insightful guidance and continuous feedback during the course of this work. The author was partially supported by a graduate fellowship from NSF grant DMS-2246399.

\newpage
\appendix
\appendixpage
\section{Proofs of the shelling lemmas from Section 3}

\begin{lemma}[Prefix Replacement] \label{lemma:prefix-replacement} Let $\Delta$ be a shellable simplicial sphere or a shellable simplicial ball with a shelling order $F_1, \ldots, F_s$ and let $\Delta_1$ be the simplicial ball generated by an initial segment $F_1, \ldots, F_m$ of this shelling. If $\Delta_2$ is a shellable simplicial ball with $\partial \Delta_2 = \partial \Delta_1$, then $\Delta' \coloneq \Delta_2 \cup (\Delta \setminus \Delta_1)$ is shellable.
\end{lemma}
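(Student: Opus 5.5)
The natural shelling of $\Delta'$ lists a shelling $G_1,\dots,G_p$ of $\Delta_2$ first, followed by $F_{m+1},\dots,F_s$ in their original order. Since $G_1,\dots,G_p$ is a shelling of $\Delta_2$, the first $p$ steps are valid shelling steps. The remaining task is to check that, for each $j>m$, the intersection
\[
\overline{F_j}\cap\Bigl(\Delta_2\cup\overline{F_{m+1}}\cup\cdots\cup\overline{F_{j-1}}\Bigr)
\]
is pure of dimension $d-1$. To do this, compare it with the original intersection
\[
\overline{F_j}\cap\Bigl(\Delta_1\cup\overline{F_{m+1}}\cup\cdots\cup\overline{F_{j-1}}\Bigr),
\]
which is pure $(d-1)$-dimensional because $F_1,\dots,F_s$ is a shelling of $\Delta$.

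The key claim is that for every face $\sigma\subseteq F_j$ with $j>m$, we have $\sigma\in\Delta_1$ if and only if $\sigma\in\Delta_2$; equivalently, $\overline{F_j}\cap\Delta_1=\overline{F_j}\cap\Delta_2$. With this claim the two intersections above coincide, and the shelling condition for $F_j$ transfers from the old order to the new one.

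The proof of the claim rests on $\overline{F_j}\cap\Delta_1\subseteq\partial\Delta_1$. I would argue via pseudomanifold structure. Suppose $\sigma\subseteq F_j$ lies in $\Delta_1$. Then $\sigma$ lies in the complex $\Delta_1\cap\overline{(\Delta\setminus\Delta_1)}$. Here $\Delta$ is a sphere or ball, $\Delta_1$ is a subball, and $\Delta\setminus\Delta_1$ is the complementary full-dimensional subcomplex. Their intersection is the common part of the two complexes, which lies in $\partial\Delta_1$. Indeed, any face in the relative interior of $|\Delta_1|$ has its whole star in $\Delta_1$ (interior faces of a ball have star equal to a ball contained in $\Delta_1$), so it cannot also lie in a facet outside $\Delta_1$. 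Hence $\sigma\in\partial\Delta_1=\partial\Delta_2\subseteq\Delta_2$.

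Conversely, suppose $\sigma\subseteq F_j$ lies in $\Delta_2$. I need $\sigma\in\Delta_1$, and this is the main obstacle: a priori $\Delta_2$ could contain a face of $F_j$ as an interior face not lying in $\Delta_1$. I would handle this by using that $\Delta'$ should be understood as a gluing along the common boundary, i.e. implicitly $\Delta_2\cap(\Delta\setminus\Delta_1)=\partial\Delta_2$. This holds in the application in Section 3, where $\Delta'$ is again a sphere. The point I expect to require the most care is stating this hypothesis precisely, or deriving it from the assumption that $\Delta'$ is a sphere or ball. With it, $\sigma\in\partial\Delta_2=\partial\Delta_1\subseteq\Delta_1$, and the claim holds in both directions, which completes the proof.
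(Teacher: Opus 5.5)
Your proof follows the paper's: the same order $G_1,\dots,G_p,F_{m+1},\dots,F_s$, and the same transfer of the shelling condition via $\overline{F_j}\cap\Delta_2=\overline{F_j}\cap\partial\Delta_2=\overline{F_j}\cap\partial\Delta_1=\overline{F_j}\cap\Delta_1$. What you add is a correct diagnosis of the one step the paper does not justify. The paper derives $\overline{F_j}\cap\Delta_2\subseteq\partial\Delta_2$ only from the fact that $F_j$ is not a facet of $\Delta_2$, and that is not enough.

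The corresponding inclusion for $\Delta_1$ is fine, because $\Delta_1\subseteq\Delta$ with $\Delta$ a sphere or ball. This is your link argument, stated precisely as follows. For $\sigma\in\Delta_1\setminus\partial\Delta_1$, $\mathrm{lk}_{\Delta_1}\sigma$ is a sphere of the same dimension as $\mathrm{lk}_{\Delta}\sigma$. A $k$-sphere cannot be a proper subcomplex of a $k$-sphere or $k$-ball (compare top-dimensional mod-$2$ cycles). Hence $\mathrm{st}_{\Delta}\sigma\subseteq\Delta_1$, and $\sigma$ lies in no $F_j$ with $j>m$.

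For $\Delta_2$ there is no such containment. In fact the lemma as literally stated is false, with $\Delta'$ read as a union of complexes on common vertex labels, as in Section 3. Here is a counterexample.
\begin{itemize}
\item Let $\Delta$ be the boundary of the icosahedron, shelled starting from a triangle $F_1=\{a,b,c\}$, and take $m=1$.
\item Let $v$ be the vertex antipodal to $a$. With $a$ at the top, $b$ and $c$ lie in the upper pentagon while the neighbours of $v$ form the lower one, so $v$ is adjacent to none of $a,b,c$.
\item Let $\Delta_2$ be the cone from $v$ over the $3$-cycle $\partial\Delta_1$, with facets $vab$, $vbc$, $vca$.
\end{itemize}
Then $\Delta_2$ is a shellable $2$-ball with $\partial\Delta_2=\partial\Delta_1$. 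However, $\Delta_2\cap(\Delta\setminus\Delta_1)=\partial\Delta_1\sqcup\{v\}$. Since both pieces are balls, Mayer--Vietoris gives $\tilde H_1(\Delta')\cong\mathbb{Z}$, so $\Delta'$ is not shellable. Concretely, in the proposed order the first $F_j$ containing $v$ meets its predecessors in a complex having $\{v\}$ as a maximal face.

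So a hypothesis such as your $\Delta_2\cap(\Delta\setminus\Delta_1)=\partial\Delta_2$ is necessary, not merely convenient. With it your argument is complete; it also rules out some $G_i$ coinciding with some $F_j$. As you anticipate, the hypothesis follows whenever $\Delta'$ is a sphere or ball whose facet set is the disjoint union of those of $\Delta_2$ and $\Delta\setminus\Delta_1$: apply the same link argument to the ball $\Delta_2$ inside $\Delta'$.

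This is what makes the application in Section 3 valid. There $\Delta'=\partial B(I,J)$ is a sphere, since $B(I,J)$ is a ball by the result of Novik and Zheng. Moreover, the facet sets $W_1(I,J)=U_2(J)$ and $W_2(I,J)\cup W_3(I,J)$ are disjoint. Hence $\overline{U_2(J)}$ meets the rest of $\partial B(I,J)$ only along its boundary.
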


\begin{proof}
    Let $G_1, \ldots, G_r$ be a shelling of $\Delta_2$. We will show that $G_1, \ldots, G_r, F_{m+1}, \ldots, F_s$ is a shelling of $(\Delta \setminus \Delta_1) \cup \Delta_2$. 
    
    It is enough to verify that $F_i$ satisfies the shelling condition for $m < i \leq s$. That is, we want to show that 
    $$\overline{F_i} \cap (\Delta_2 \cup (\overline{F_{m+1}} \cup \cdots \cup \overline{F_{i-1})})$$
    is pure of dimension $d-1$.
    This follows by considering the following equalities:
    \begin{align*}
        \overline{F_i} \cap (\Delta_2 \cup (\overline{F_{m+1}} \cup \cdots \cup \overline{F_{i-1}})) &= (\overline{F_i} \cap \Delta_2) \cup (\overline{F_i} \cap (\overline{F_{m+1}} \cup \cdots \cup \overline{F_{i-1}}))\\
        &= (\overline{F_i} \cap \partial\Delta_2) \cup (\overline{F_i} \cap (\overline{F_{m+1}} \cup \cdots \cup \overline{F_{i-1}}))\\
        &= (\overline{F_i} \cap \partial\Delta_1) \cup (\overline{F_i} \cap (\overline{F_{m+1}} \cup \cdots \cup \overline{F_{i-1}}))\\
        &= (\overline{F_i} \cap \Delta_1) \cup (\overline{F_i} \cap (\overline{F_{m+1}} \cup \cdots \cup \overline{F_{i-1}}))\\
        &= \overline{F_i} \cap (\Delta_1 \cup (\overline{F_{m+1}} \cup \cdots \cup \overline{F_{i-1}})).
    \end{align*}
    Here the second equality holds because $F_i$ is not a facet of $\Delta_2$, so its intersection with $\Delta_2$ lies in $\partial\Delta_2$. The third equality follows from the assumption $\partial\Delta_2=\partial\Delta_1$. The fourth equality holds because $F_i$ is not a facet of $\Delta_1$ (since $i>m$), so $\overline{F_i}\cap\Delta_1\subseteq \partial\Delta_1$.
    
    Finally, we know that $\overline{F_i} \cap (\Delta_1 \cup (\overline{F_{m+1}} \cup \cdots \cup \overline{F_{i-1}}))$ is pure of dimension $d-1$ because this is the shelling condition for $F_i$ in the shelling $F_1, \ldots, F_s$ of $\Delta$. Hence the proposed ordering is a shelling of $(\Delta\setminus\Delta_1)\cup\Delta_2$.
    \end{proof}

\begin{lemma}[Reversibility]\label{lemma:reversibility} \footnote{See Lemma 8.10 of \cite{zieglerLecturesPolytopes2007} for a version of this result.}
    If $F_1, F_2, \cdots, F_s$ is a shelling order for a simplicial sphere $\Delta$, then so is the reverse order $F_s, F_{s-1}, \cdots, F_1$.
\end{lemma}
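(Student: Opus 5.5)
The plan is to use the Restriction Face Criterion (Lemma \ref{lemma:restriction-face-criterion}) in the reversed direction. Write $\Delta$ for the sphere and fix the shelling $F_1,\dots,F_s$. Let $R_j$ be the restriction face of $F_j$; by definition it is the unique minimal face of $\overline{F_j}$ that is not contained in $\overline{F_1}\cup\cdots\cup\overline{F_{j-1}}$. For the reversed order I will show that $F_j$ admits the restriction face $R'_j := F_j\setminus R_j$. Applying the criterion to the reversed order then gives the lemma.

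The key fact is the following. For a shelling of a pure $(d-1)$-dimensional complex, and for each $x\in F_j$, the ridge $F_j\setminus\{x\}$ lies in an earlier facet if and only if $x\in R_j$.
\begin{itemize}
\item[($\Rightarrow$)] If $x\notin R_j$, then $R_j\subseteq F_j\setminus\{x\}$. So the ridge is a new face and lies in no earlier facet.
\item[($\Leftarrow$)] If $x\in R_j$, this is exactly Condition 1 of Definition \ref{def:restriction-face}.
\end{itemize}
Because $\Delta$ is a pseudomanifold (a sphere), each ridge $F_j\setminus\{x\}$ lies in exactly two facets, $F_j$ and some $F_k$ with $k\neq j$. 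Hence for $x\in F_j$ we get the dichotomy
\[
x\in R_j \iff k<j, \qquad x\notin R_j \iff k>j .
\]

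I now verify the two conditions for $R'_j$ in the reversed order, where "earlier" means larger index.

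Condition 1: take $x\in R'_j$, so $x\notin R_j$. By the dichotomy, the other facet containing $F_j\setminus\{x\}$ is some $F_k$ with $k>j$, which is earlier in the reversed order.

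Condition 2: suppose $R'_j\subseteq F_k$ for some $k>j$. Consider the step $k$ of the original shelling. The face $F_j\cap F_k$ belongs to $\overline{F_k}$ and to an earlier facet, so it is not new at step $k$. Hence $R_k\not\subseteq F_j\cap F_k$, and therefore $R_k\not\subseteq F_j$. The intended contradiction is to deduce $R_k\subseteq F_j$ from $F_j\setminus R_j\subseteq F_k$. The natural tool is to pick $y\in R_k\setminus F_j$ and look at the ridge $F_k\setminus\{y\}$, whose other facet has index less than $k$. Completing this step directly looks delicate, so this is where I expect the main obstacle.

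My fallback, which I expect to be cleaner, is a counting argument through $h$-vectors. Let $\Delta_j=\overline{F_1}\cup\cdots\cup\overline{F_j}$ and $\Gamma_j=\overline{F_{j+1}}\cup\cdots\cup\overline{F_s}$. In the reversed order, adding $F_j$ attaches it to $\Gamma_j$. By the shelling property, $\Delta_{j-1}$ is a ball (or the whole sphere), and its complement $\Gamma_{j-1}$ in the sphere $\Delta$ is then a ball as well. The intersection $\overline{F_j}\cap\Gamma_j$ is the subcomplex of $\partial\overline{F_j}$ generated by the ridges $F_j\setminus\{x\}$ with $x\notin R_j$, because these are exactly the ridges whose neighbouring facet comes later. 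I then need to show that $\overline{F_j}\cap\Gamma_j$ contains no further lower-dimensional faces. Concretely, every face of $F_j$ containing $R_j$ must fail to lie in $\Gamma_j$, since such a face would otherwise lie in both $\Delta_j$-new territory and $\Gamma_j$. This follows from the Alexander-duality/complement fact that $\Delta_j\cap\Gamma_j=\partial\Delta_j$, a pure $(d-1)$-dimensional sphere. The new faces $[R_j,F_j]$ at step $j$ lie in the interior of $\Delta_j$ except those on $\partial\Delta_j$, and one checks via this purity that faces meeting $\Gamma_j$ lie in ridges shared with $\Gamma_j$.

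Either route yields that $\overline{F_j}\cap\Gamma_j$ is pure of dimension $d-1$, which completes the proof. I would also cite Ziegler Lemma 8.10 as the model argument.
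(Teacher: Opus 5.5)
Your Condition~1 is correct, and it is the same ridge dichotomy the paper's proof relies on: each ridge $F_j\setminus\{x\}$ lies in exactly one other facet, and that facet is earlier iff $x\in R_j$. The gap is Condition~2, which is the real content of the lemma. You need $F_j\setminus R_j\not\subseteq F_k$ for every $k>j$. Equivalently, $\overline{F_j}\cap\Gamma_j$ must contain no face outside the complex generated by the ridges $F_j\setminus\{x\}$ with $x\notin R_j$. Your direct attempt stops there, and the fallback does not supply it:
\begin{itemize}
\item The reformulation ``every face of $F_j$ containing $R_j$ must fail to lie in $\Gamma_j$'' is false whenever $R_j\neq F_j$, i.e.\ for every $j<s$. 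The ridges $F_j\setminus\{x\}$ with $x\notin R_j$ contain $R_j$ and lie in later facets. The correct requirement concerns faces containing $F_j\setminus R_j$.
\item $\overline{F_j}\cap\Gamma_j=\overline{F_j}\cap\partial\Delta_j$, and purity of $\partial\Delta_j$ does not make this intersection pure. A face of $F_j$ can lie on $\partial\Delta_j$ via boundary ridges of \emph{other} facets of $\Delta_j$ while no ridge of $F_j$ through it is a boundary ridge. Ruling out exactly this is what is required.
\item The claim that $\Gamma_{j-1}$ is a ball would also need Newman's theorem.
\end{itemize}
The paper's proof goes from the ridge dichotomy straight to the identification $F_k\cap\Delta'_k=\partial F_k\setminus(F_k\cap\Delta_k)$. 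It uses the fact that only the last facet of a shelling of a sphere is glued along its whole boundary, but only to get non-emptiness. You correctly see that the dichotomy by itself controls only ridges, but you leave the lower-dimensional faces unhandled.

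The missing idea is to apply that last-facet fact to a link rather than to $\Delta$. Suppose $F_j\setminus R_j\subseteq F_k$ with $k>j$, and let $\sigma=F_j\cap F_k$, so that $F_j\setminus\sigma\subseteq R_j$. Take the facets $F_i\setminus\sigma$ (with $F_i\supseteq\sigma$) of $\operatorname{lk}_\Delta(\sigma)$, ordered by $i$. This is a shelling with restriction faces $R_i\setminus\sigma$. Indeed, a face $\tau\subseteq F_i\setminus\sigma$ lies in an earlier facet of the link iff $\tau\cup\sigma$ lies in an earlier facet of $\Delta$, iff $R_i\not\subseteq\tau\cup\sigma$, iff $R_i\setminus\sigma\not\subseteq\tau$. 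Hence $F_j\setminus\sigma$ has the whole facet as its restriction face, even though $F_k\setminus\sigma$ comes after it. But $\operatorname{lk}_\Delta(\sigma)$ is a homology sphere, so its top $h$-number is $1$. So exactly one facet of any shelling of it has full restriction face. That facet must be the last one, because every ridge of the link lies in exactly two of its facets. This contradiction proves Condition~2, and your first route then finishes the proof via the Restriction Face Criterion.
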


\begin{proof}
Fix $1 \leq k \leq s$, and consider the complexes $\Delta_k = \bigcup_{i=1}^{k-1} F_i$ and $\Delta'_k = \bigcup_{i=k+1}^s F_i$.
We know that for $k > 1$, $F_k \cap \Delta_k$ is pure of dimension $d-2$, and need to verify that, for $k < s$, so is $F_k \cap \Delta'_{k}$.

Since $\Delta$ is a simplicial sphere, every ridge is contained in exactly two facets. Now consider any ridge $R \subset F_k$. It is contained in exactly one other facet, say $F_j$, which is either in $\Delta_k$ or in $\Delta'_k$.

Since $\partial F_k$ is a sphere of dimension $d-2$, and its set of ridges splits into those shared with earlier facets and those shared with later facets, it follows that $F_k \cap \Delta'_k = \partial F_k \setminus (F_k \cap \Delta_k)$. Furthermore, $F_k \cap \Delta_k \neq \partial F_k$ because $k < s$ and only the last facet of a shelling of a sphere gets glued along its entire boundary.  This implies that $F_k \cap \Delta'_k$ is non-empty and, therefore, must be a pure $(d-2)$-dimensional subcomplex of $\partial F_k$.

Thus, for each $k$, the intersection of $F_k$ with the union of the facets coming after it in the reverse order is pure of dimension $d-2$, showing that $F_s, \dots, F_1$ is a shelling order.
\end{proof}

\section{Proof of the technical lemmas from Section 4}

\begin{lemma}\label{lemma:about-F-and-m}
    Let $m = \min\{i : (a_1, \ldots, a_{2i})\cup (a_{2i+1}+1, \ldots, a_{2t}+1) \in B(I)\}$. Then, for $0\leq j < m$,
    $$F_{[2j+1, 2t]} \in B(I(F_{[1,2j]}), 2j+1)\setminus B(I(F_{[1,2j]}) - 1, a_{2j}+1).$$
    Furthermore, for $j=m$,
    $$F_{[2j+1, 2t]} \in B(I(F_{[1,2j]}) - 1, a_{2j}+1).$$
\end{lemma}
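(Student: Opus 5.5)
The key is to rephrase membership in $B(I(F_{[1,2j]}),a_{2j}+1)$ and in $B(I(F_{[1,2j]})-1,a_{2j}+1)$ as statements about $B(I)$ and $B(I-1)$, and then use the monotonicity of the test sets
$$H_i \coloneq (a_1,\ldots,a_{2i})\cup(a_{2i+1}+1,\ldots,a_{2t}+1).$$
We use the convention $a_0=0$ and $F_{[1,0]}=\emptyset$, so that the case $j=0$ reads $F\in B(I)\setminus B(I-1)$ (the ``$2j+1$'' in the first display should be $a_{2j}+1$, and these agree for $j=0$).

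\textbf{Step 1: translation.} As observed right after the definition of $I(M)$, a set $N$ with $\min N>a_{2j}$ satisfies $N\in B(I(F_{[1,2j]}),a_{2j}+1)$ if and only if $F_{[1,2j]}\cup N\in B(I)$. Taking $N=F_{[2j+1,2t]}$ gives the first membership for every $j$, since $F\in B(I)$. For the second membership, I claim
$$N\in B(I(F_{[1,2j]})-1,\,a_{2j}+1)\iff F_{[1,2j]}\cup(N+1)\in B(I),$$
where $N+1$ means adding $1$ to every element. One direction holds because $N\le_p y-1$ for some $y\in I(F_{[1,2j]})$ gives $N+1\le_p y$, hence $F_{[1,2j]}\cup(N+1)\in\mathcal F(I)$. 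For the converse, let $N+1\le_p y$ for some maximal $y$ with $F_{[1,2j]}\cup y\in\mathcal F(I)$. Then $y_1\ge \min(N)+1>a_{2j}+1$, so $y-1$ is defined and has minimum at least $a_{2j}+1$, and $N\le_p y-1$. I need to check the convention on $I-1$ here: it drops elements with $x_1=1$, and this causes no trouble because the minimum is at least $a_{2j}+2\ge 2$. Applying the claim with $N=F_{[2j+1,2t]}$, the second membership becomes $H_j\in B(I)$.

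\textbf{Step 2: conclude via $m$.} By the definition of $m$, $H_j\notin B(I)$ for $j<m$, so $F_{[2j+1,2t]}\notin B(I(F_{[1,2j]})-1,a_{2j}+1)$; together with the first membership this gives the first assertion. For $j=m$ we have $H_m\in B(I)$, which gives the second assertion. If $m$ is undefined, no $i\le t$ works; but $H_t=F\in B(I)$, so $m\le t$ always. For $m=t$, the set $F_{[2t+1,2t]}$ is empty, and the statement should be read as vacuous or trivially true, with $I(F)$ containing the empty set.

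\textbf{Main obstacle.} The delicate point is Step 1: making the shifted antichain $I(M)-1$ interact correctly with the minimum restriction $a_{2j}+1$ and with the convention that drops elements whose first coordinate is $1$. One must verify that the shift keeps $N+1$ a valid element of $\mathcal F$, meaning its last entry is at most $n$; this holds because $N+1\le_p y$. One must also confirm that $I(M)$, taken as the set of maximal elements of $S(I)$, generates the whole downset $S(I)$. Both are bookkeeping, but the claimed equivalence hinges on them.
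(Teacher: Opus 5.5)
Your proof is correct and follows the same route as the paper's. Both arguments reduce the two memberships to whether $H_j=F_{[1,2j]}\cup(F_{[2j+1,2t]}+1)$ lies in $B(I)$, then use the minimality of $m$ for $j<m$ and the definition of $m$ for $j=m$; your Step 1 just spells out the shift equivalence $N\in B(I(F_{[1,2j]})-1,a_{2j}+1)\iff F_{[1,2j]}\cup(N+1)\in B(I)$, including the check that dropping elements with $x_1=1$ is harmless, which the paper passes over with ``equivalently'' and ``which implies'' (and you are right that the $2j+1$ in the first display should read $a_{2j}+1$, as in the main-text statement).
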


\begin{proof}
    First, since $F \in B(I)$, it follows that 
    $$F_{[2j+1, 2t]} \in B(I(F_{[1,2j]}), a_{2j}+1).$$
    Now suppose that $0 \leq j < m$. By the minimality of $m$, we know that $F_{[1,2j]} \cup (F_{[2j+1,2t]}+1) \notin B(I)$. Equivalently, $F_{[2j+1,2t]}+1 \notin B(I(F_{[1,2j]}), a_{2j}+1)$, which implies that for $0 \leq j < m$,
    $$F_{[2j+1,2t]} \notin B(I(F_{[1,2j]})-1, a_{2j}+1),$$
    as desired.
    
    Now suppose that $j=m$. By the definition of $m$, $F_{[1,2m]} \cup (F_{[2m+1,2t]}+1) \in B(I)$. Equivalently, $F_{[2m+1,2t]}+1 \in B(I(F_{[1,2m]}, a_{2m}+1)$, which implies that for $j=m$,
    $$F_{[2m+1,2t]} \in B(I(F_{[1,2m]})-1, a_{2m}+1).$$
    This completes the proof.
\end{proof}

\begin{lemma}\label{lemma:G_case_2a}
    Let $G \coloneq G_B(F, x)$ be defined according to case 2a of Definition \ref{def:G_B(F,x)}, and let $r = \max\{j : F_{[1,2j]}=G_{[1,2j]}\}$. Then, for $0 \leq j \leq r$, 
    $$G_{[2j+1, 2t]} \in B(I(F_{[1,2j]}), a_{2j}+1)\setminus B(I(F_{[1,2j]}) - 1, a_{2j}+1).$$
\end{lemma}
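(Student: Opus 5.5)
The plan is to make $G = L(F,x)$ completely explicit, read off $r$, and then get both the membership and the non-membership from the fact that $\mathcal F(I)$ is downward closed, together with the minimality of $m$.

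\textbf{Step 1: explicit form of $G$.} Write $x = a_{2i}$ with $1 \le i < m$. In $F=(a_1,\dots,a_{2t})$ we have $a_{2i-1} = a_{2i}-1$ and $a_{2i-2}+1 < a_{2i-1}$. So the immediate predecessors of $a_{2i}$ in $F$ consist only of $a_{2i-1}$, and $a_{2i-1}-1 \notin F$. Pushing left therefore gives
$$G = L(F,a_{2i}) = (a_1,\dots,a_{2i-2},\, a_{2i-1}-1,\, a_{2i-1},\, a_{2i+1},\dots,a_{2t}).$$
In point coordinates, $G$ is $F$ with the $i$-th coordinate decreased by one. The pair $(a_{2i-1}-1,a_{2i-1})$ may now abut the previous pair. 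Even so, all left, middle and right sets remain even, and the coordinate condition $b_{i-1}+1<b_i$ still holds. Hence $G\in\mathcal F$ and $G<_p F$, and consequently $r=i-1$.

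\textbf{Step 2: membership in $B(I(F_{[1,2j]}),a_{2j}+1)$.} Fix $0\le j\le r=i-1$. Then $G_{[1,2j]}=F_{[1,2j]}$ and $\min G_{[2j+1,2t]} \ge a_{2j}+1$. For $j=i-1$ this uses $a_{2i-1}-1\ge a_{2i-2}+1$. Since $G \in \mathcal F(I)$, the equivalence following the definition of $I(M)$ gives $G_{[2j+1,2t]}\in B(I(F_{[1,2j]}),a_{2j}+1)$.

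\textbf{Step 3: non-membership in $B(I(F_{[1,2j]})-1,a_{2j}+1)$.} This is the main point. As in the proof of Lemma \ref{lemma:about-F-and-m}, membership in $B(I(F_{[1,2j]})-1,a_{2j}+1)$ is equivalent to
$$H_j \coloneq F_{[1,2j]}\cup\bigl(G_{[2j+1,2t]}+1\bigr) \in B(I).$$
I will compare $H_j$ with
$$K \coloneq F_{[1,2i]}\cup\bigl(F_{[2i+1,2t]}+1\bigr),$$
which is not in $B(I)$ by the minimality of $m$, since $i<m$. The coordinates of $H_j$ are as follows:
\begin{itemize}
\item[] $a_{2p-1}$ for $p\le j$;
\item[] $a_{2p-1}+1$ for $j<p<i$;
\item[] $(a_{2i-1}-1)+1=a_{2i-1}$ for $p=i$;
\item[] $a_{2p-1}+1$ for $p>i$.
\end{itemize}
Thus $H_j$ agrees with $K$ in every coordinate except those with $j<p<i$, where it is larger by one. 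Hence $K\le_p H_j$. Because $\mathcal F(I)$ is an initial set, $H_j\in B(I)$ would force $K\in B(I)$, which is a contradiction.

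The step I expect to need the most care is checking that $H_j$ is a legitimate element of $\mathcal F$, with vertices in $[n]$ and the required gaps. This follows from $K\in\mathcal F$ and $F\in\mathcal F$. The shifted coordinates for $j<p<i$ keep their gaps because they are translates of $F$'s consecutive pairs. At the boundary $p=j$ to $p=j+1$ the gap only widens. At $p=i-1$ to $p=i$ we need $a_{2i-2}+1 < a_{2i-1}$, which holds since $a_{2i-2}+1<a_{2i-1}$ in $F$.
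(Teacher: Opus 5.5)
Your strategy is the right one, but Step 1 rests on a false claim, so as written the proof misses a case. You assert $a_{2i-2}+1<a_{2i-1}$, whereas membership in $\mathcal F$ only gives $a_{2i-2}<a_{2i-1}$. The point condition $b_{i-1}+1<b_i$ reads $a_{2i-3}+1<a_{2i-1}$, and middle sets of size $4,6,\dots$ are allowed. So the pair $(a_{2i-1},a_{2i})$ may already abut the previous pair. In that case $L(F,a_{2i})$ pushes the whole run of consecutive elements ending at $a_{2i}$, not a single pair. For example, take $t=3$, $I=\{F\}$ and $F=(3,4,5,6,8,9)$. Then $m=3$, so $x=a_4=6$ falls under case 2a. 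Here $L(F,6)=(2,3,4,5,8,9)$, giving $r=0$. Your formula instead produces $(3,4,4,5,8,9)$, which is not even a set, and $r=1$. In this situation three things break: your value of $r$, the inequality $a_{2i-1}-1\ge a_{2i-2}+1$ used in Step 2, and the coordinate list of $H_j$ in Step 3.

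The repair is short. Let $a_{2q-1},\dots,a_{2i}$ be the maximal run of consecutive integers in $F$ ending at $a_{2i}$; it starts at an odd index because $F$ is a union of pairs. Then:
\begin{itemize}
\item $G$ is $F$ with point coordinates $q,\dots,i$ each lowered by one, and $r=q-1$.
\item Step 2 uses $a_{2q-2}+1<a_{2q-1}$, which holds by maximality of the run. For $q=1$ you need $a_1\ge 2$; this is true because $i<m$ gives $F\notin B(I-1)$ and case 1 is excluded.
\item In Step 3, $H_j$ now agrees with $K$ except in coordinates $j<p<q$, where it is larger by one. So $K\le_p H_j$, and your contradiction goes through.
\end{itemize}
A smaller point: $K\in\mathcal F$ can fail when $a_{2t}=n$. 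What you actually need is that $H_j\in B(I)$ forces $K\in\mathcal F$. This holds because both have maximum $a_{2t}+1$ and $K$ only widens one gap relative to $F$.

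With this fix your argument is essentially the paper's. The paper never computes $G$; it uses only that $G<_p F$ and that $G$ lowers by one some coordinates of index at most $i\le m-1$. It then compares $G_{[2j+1,2t]}$ with the minimality witness at index $m-1$ shifted down by one, where you use the witness at index $i$ and shift $G$ up.
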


\begin{proof}
    Let $0 \leq j \leq r$. Since $G <_p F$ and $F \in B(I)$, it follows that $G$ must be in $B(I)$. Since $G_{[1,2j]}=F_{[1,2j]}$, this implies that, for $0 \leq j \leq r$,
    $$G_{[2j+1, 2t]} \in B(I(G_{[1,2j]}), a_{2j}+1) = B(I(F_{[1,2j]}), a_{2j}+1).$$
    
    To show that $G_{[2j+1, 2t]} \not \in B(I(F_{[1,2j]})-1, a_{2j}+1)$, let 
        \[
            H = (a_1, \ldots, a_{2(m-1)}) \cup (a_{2m-1}+1, \ldots, a_{2t}+1).
        \]
    By the minimality of $m$, $H \not \in B(I)$. Since $H_{[1,2j]} = F_{[1,2j]}$, this implies that $H_{[2j+1, 2t]} \notin B(I(F_{[1,2j]}), a_{2j}+1)$. Consequently, 
    $$H_{[2j+1, 2t]}-1 \not \in B(I(F_{[1,2j]})-1, a_{2j}+1).$$ 
    Since $H_{[2j+1, 2t]}-1 <_p G_{[2j+1, 2t]}$, it follows that $G_{[2j+1, 2t]}$ is also not in $B(I(F_{[1,2j]})-1, a_{2j}+1)$. We conclude that, for $0 \leq j \leq r$, 
    $$G_{[2j+1, 2t]} \in B(I(F_{[1,2j]}), a_{2j}+1)\setminus B(I(F_{[1,2j]})-1, a_{2j}+1),$$
    as desired.    
\end{proof}

\begin{lemma}\label{lemma:G_case_2b}
    Let $G \coloneq G_B(F, x)$ be defined according to case 2b of Definition \ref{def:G_B(F,x)}, and let $r = \max\{j : F_{[1,2j]}=G_{[1,2j]}\}$. Recall that $m = \min\{i : (a_1, \ldots, a_{2i})\cup (a_{2i+1}+1, \ldots, a_{2t}+1) \in B(I)\}$. Then, for $0 \leq j \leq r$,
    $$G_{[2j+1, 2t]} \in B(I(F_{[1,2j]}), a_{2j}+1)\setminus B(J(F_{[1,2j]}), a_{2j}+1).$$
    Furthermore, for $0 \leq j < m$,
    $$G_{[2j+1, 2t]} \in B(I(F_{[1,2j]}), a_{2j}+1)\setminus B(I(F_{[1,2j]})-1, a_{2j}+1).$$
\end{lemma}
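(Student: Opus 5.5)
We work in case 2b of Definition \ref{def:G_B(F,x)}. So $x = a_{2i_0-1}$ for some $m < i_0 \le t$ and $G = R(F,x)$. Since $F\in\mathcal F$ has the form $(a_1,a_1+1,\dots,a_t',a_t'+1)$ with gaps between the pairs, pushing the odd-indexed element $a_{2i_0-1}$ to the right moves the pair $\{a_{2i_0-1},a_{2i_0}\}$ one step right. In pair coordinates, $G$ is obtained from $F$ by adding $1$ to a single coordinate, namely the $i_0$-th. Hence $F <_p G$, $G_{[1,2j]}=F_{[1,2j]}$ exactly for $j\le i_0-1$ (so $r=i_0-1\ge m$), and $G_{[2j+1,2t]}=F_{[2j+1,2t]}+e$ where $e$ shifts only the pair $i_0$.

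\textbf{First claim, membership in $B(I)$.} Fix $0\le j\le r$. Define $H=F_{[1,2m]}\cup(F_{[2m+1,2t]}+1)$, which lies in $B(I)$ by the definition of $m$. Since $i_0>m$, every pair coordinate of $G$ is at most the corresponding coordinate of $H$: coordinates with index $\le m$ agree with those of $F$, and later coordinates are at most $F$'s coordinate plus one. Thus $G\le_p H$, and since $\mathcal F(I)$ is an initial set, $G\in B(I)$. Because $G_{[1,2j]}=F_{[1,2j]}$, this gives $G_{[2j+1,2t]}\in B(I(F_{[1,2j]}),a_{2j}+1)$. One should check that $G$ is a legal element of $\mathcal F$, i.e. that $r(F,x)$ does not collide with the next pair. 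This follows because $H\in\mathcal F$ already has the shifted pairs separated.

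\textbf{First claim, non-membership in $B(J)$.} This follows from $F\notin B(J)$: if $G_{[2j+1,2t]}\in B(J(F_{[1,2j]}),a_{2j}+1)$, then $G\in B(J)$. Since $F<_p G$ and $\mathcal F(J)$ is downward closed, $F\in B(J)$, a contradiction. The same reasoning works for each $j$ because the prefix $F_{[1,2j]}$ is common to $F$ and $G$.

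\textbf{Second claim.} Fix $0\le j<m$. Membership in $B(I(F_{[1,2j]}),a_{2j}+1)$ was already shown, since $j<m\le r$. For non-membership in the $(I-1)$ complex, compare with Lemma \ref{lemma:about-F-and-m}, which says $F_{[2j+1,2t]}\notin B(I(F_{[1,2j]})-1,a_{2j}+1)$. Because $F_{[2j+1,2t]}\le_p G_{[2j+1,2t]}$, and this complex is generated by an initial set, $G_{[2j+1,2t]}$ cannot lie in it either. One subtlety must be checked: $B(I(M)-1,k)$ is exactly the set of $N$ with $N+1\in B(I(M),k)$ and $\min N\ge k$. This is the same identification used in the proof of Lemma \ref{lemma:about-F-and-m}, and the downward-closure argument then goes through.

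\textbf{Main obstacle.} The hardest part is the bookkeeping in the first step. One must confirm that $G\le_p H$ coordinatewise, including the boundary case where the shifted pair touches the next pair or the value $n$, and that $r=i_0-1$ so the ranges $0\le j\le r$ and $0\le j<m$ are handled uniformly.
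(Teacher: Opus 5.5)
Your proof is correct and is essentially the paper's argument: $G\le_p H$ with $H=F_{[1,2m]}\cup(F_{[2m+1,2t]}+1)$ gives $G\in B(I)$, $F<_p G$ together with $F\notin B(J)$ gives $G\notin B(J)$, and Lemma~\ref{lemma:about-F-and-m} plus downward closure handles $j<m$ (you also make explicit that $r=i_0-1\ge m$, which the paper uses only tacitly). One small correction: consecutive pairs of an element of $\mathcal F$ may be adjacent, since pair starts only need to differ by at least $2$, so $R(F,x)$ shifts the whole run of adjacent pairs beginning at pair $i_0$ rather than pair $i_0$ alone, and this makes your legality check for $G$ automatic; every property you actually use ($F<_p G$, agreement on the first $i_0-1$ pairs, each pair coordinate increasing by at most one) still holds.
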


\begin{proof}
    Let $H = (a_1, \ldots, a_{2m}) \cup (a_{2m+1}+1, \ldots, a_{2t}+1)$. Note that, by the definition of $m$, $H \in B(I)$. Since $G <_p H$, $G$ must also be in $B(I)$. Since for $0 \leq j \leq r$, $F_{[1,2j]}= G_{[1,2j]}$, we obtain that
    $$G_{[2j+1, 2t]} \in B(I(G_{[1,2j]}), a_{2j}+1) = B(I(F_{[1,2j]}), a_{2j}+1) \text{ for } 0 \leq j \leq r.$$
    
    Furthermore, since $F <_p G$ and $F \notin B(J)$, it follows that $G \notin B(J)$. Therefore,
    $$G_{[2j+1, 2t]} \notin B(J(G_{[1,2j]}), a_{2j}+1) = B(J(F_{[1,2j]}), a_{2j}+1) \text{ for } 0 \leq j \leq r.$$
    
    Now suppose that $0 \leq j < m$. By Lemma \ref{lemma:about-F-and-m}, $F_{[2j+1, 2t]} \notin B(I(F_{[1, 2j]})-1, a_{2j}+1)$. Since $F_{[2j+1, 2t]} <_p G_{[2j+1, 2t]}$, this implies that, for $0 \leq j < m$,
    $$G_{[2j+1, 2t]} \notin B(I(F_{[1,2j]})-1, a_{2j}+1).$$
    This completes the proof.
\end{proof}

\begin{lemma}\label{lemma:condition-2}
    Let $F\coloneq (a_1, \ldots, a_{2t})$ be a facet of $B$ such that $F \in B(I) \setminus B(I-1)$ and $\min(F) >1$, and let $G$ be a facet of $B$ containing $X_B(F)$. If $\min(G) < \min(F)$, then $G \in B(I-1)$.
\end{lemma}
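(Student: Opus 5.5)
The plan is to show directly that $G+\bm 1 \in B(I)$, which is equivalent to $G \in B(I-1)$. To get this, I will compare $G+\bm 1$ with the facet
$$H \coloneq (a_1,\ldots,a_{2m})\cup(a_{2m+1}+1,\ldots,a_{2t}+1),$$
which lies in $B(I)$ by the definition of $m$. It then suffices to prove $G+\bm 1 \le_p H$, since $\mathcal F(I)$ is an initial set.

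\textbf{Step 1: pin down $X_B(F)$.} Write $F$ in pair coordinates as $c_1<\cdots<c_t$, with $a_{2i-1}=c_i$ and $a_{2i}=c_i+1$. Since $F\notin B(I-1)$, we have $F+\bm 1\notin B(I)$, so $m\neq 0$ and $m\ge 1$. Since $\min(F)>1$ and $F \notin B(I-1)$, case 2 of Definition \ref{def:X_B(F)} applies. Hence
$$X_B(F)=\{c_i+1 : 1\le i<m\}\cup\{c_j : m<j\le t\}.$$
This set has $t-1$ elements.

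\textbf{Step 2: match each element of $X_B(F)$ to a pair of $G$.} Write $G$ in pair coordinates $g_1<\cdots<g_t$, with $g_{\ell+1}\ge g_\ell+2$ and $g_1=\min(G)<c_1$.
\begin{itemize}
\item Consecutive elements of $X_B(F)$ differ by at least $2$ (indeed by at least $3$ across the gap at index $m$). So they lie in $t-1$ distinct pairs of $G$.
\item Every element of $X_B(F)$ is at least $c_1+1$ when $m>1$, and at least $c_2>c_1+1$ when $m=1$. So the pair of $G$ containing it starts at a value $\ge c_1>g_1$.
\item Consequently the first pair of $G$ contains no element of $X_B(F)$.
\end{itemize}
Ordering everything increasingly then gives a forced matching:
\begin{itemize}
\item for $1\le i<m$, the element $c_i+1$ lies in the $(i+1)$-st pair of $G$, so $g_{i+1}\le c_i+1$;
\item for $m<j\le t$, the element $c_j$ lies in the $j$-th pair of $G$, so $g_j\le c_j$.
\end{itemize}

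\textbf{Step 3: verify $G+\bm 1\le_p H$ coordinatewise.} In pair coordinates, $H$ has starting points $c_\ell$ for $\ell\le m$ and $c_\ell+1$ for $\ell>m$. So we need $g_\ell+1\le c_\ell$ for $\ell\le m$, and $g_\ell\le c_\ell$ for $\ell>m$.
\begin{itemize}
\item For $\ell>m$ this is exactly the bound from Step 2.
\item For $\ell=1$ it follows from $g_1<c_1$.
\item For $2\le \ell\le m$, Step 2 gives $g_\ell\le c_{\ell-1}+1$. Since $c_\ell\ge c_{\ell-1}+2$, this yields $g_\ell+1\le c_\ell$.
\end{itemize}
Thus $G+\bm 1\le_p H\in \mathcal F(I)$, so $G+\bm 1\in B(I)$ and hence $G\in B(I-1)$.

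The main obstacle is the index-matching in Step 2: showing that containment of $X_B(F)$ forces the $\ell$-th element of $X_B(F)$ into the $(\ell+1)$-st or $\ell$-th pair of $G$. This needs the separation of consecutive elements of $X_B(F)$ by at least $2$ (so they occupy distinct pairs) and the hypothesis $\min(G)<\min(F)$ (which uses up the first pair). I would check the edge cases $m=1$ and $m=t$ separately to make sure the shift by one is correct there.
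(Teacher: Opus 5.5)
Your proof is correct and follows the paper's argument. Both compare $G$ (shifted by one) with $H=(a_1,\ldots,a_{2m})\cup(a_{2m+1}+1,\ldots,a_{2t}+1)$, use $\min(G)<\min(F)$ to keep every element of $X_B(F)$ out of the first block of $G$, force the $i$-th element of $X_B(F)$ into the $(i+1)$-st block, and then check the inequalities coordinatewise. Your version is slightly more careful in two places: you justify the order-preserving matching explicitly, and you bound the first coordinate directly from $g_1<c_1$, whereas the paper derives it from the second block and so tacitly needs $m\ge 2$.
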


\begin{proof}
    Since $m$ is such that 
    $$(a_1, \ldots, a_{2m}) \cup (a_{2m+1}+1, \ldots, a_{2t}+1) \in B(I),$$ 
    it is enough to show that
    $$G \leq_p (a_1 -1, \ldots, a_{2m}-1) \cup (a_{2m+1}, \ldots, a_{2t}).$$

    For this, note that since $F \notin B(I-1)$ and $\min(F)>1$, there are $t-1$ elements in $X_B(F)$, and they are non-consecutive integers. Note also that there are $t$ blocks of the form $G_{[2i+1,2i+2]}$ for $0 \leq i \leq t-1$. We will show that no element of $X_B(F) \coloneq (x_1, \ldots, x_{t-1})$ belongs to the first block, $G_{[1,2]}$. This implies that $x_i \in G_{[2i+1,2i+2]}$ for $1 \leq i \leq t-1$, which heavily restricts what $G$ can be.

    To show that no element $x_i$ of $X_B(F)$ belongs to $G_{[1,2]}$, note that $x_i \geq a_2$. Since $\min(G) < \min(F)$, $x_i$ cannot belong to $G_{[1,2]}$, as desired. Therefore, we obtain that $x_i \in G_{[2i+1,2i+2]}$ for $1 \leq i \leq t-1$.

    Since $x_i = a_{2i}$ for $1 \leq i < m$, it follows that 
    $$G_{[2i+1, 2i+2]} \leq_p (a_{2i}, a_{2i}+1) \leq_p (a_{2i+1}-1, a_{2i+2}-1).$$
    In particular, $G_{[3,4]} \leq_p (a_2, a_2+1)$. Therefore, 
    $$G_{[1,2]} \leq_p (a_2-2, a_2 -1) = (a_1-1, a_2-1).$$
    On the other hand, since $x_i = a_{2i+1}$ for $m \leq i < t$, we see that
    $$G_{[2i+1, 2i+2]} \leq (a_{2i+1}, a_{2i+1}+1) = (a_{2i+1}, a_{2i+2}).$$
    Putting this together, we conclude that
    $$G \leq_p (a_1 -1, \ldots, a_{2m}-1) \cup (a_{2m+1}, \ldots, a_{2t}).$$
    This completes the proof.
\end{proof}

\newpage
\printbibliography

\end{document}